\newcommand{\GCD}{\operatorname{GCD}}
\newcommand{\bC}{\mathbb{C}}
\newcommand{\bN}{\mathbb{N}}
\newcommand{\bQ}{\mathbb{Q}}
\newcommand{\bR}{\mathbb{R}}
\newcommand{\bS}{\mathbb{S}}
\newcommand{\bT}{\mathbb{T}}
\newcommand{\bU}{\mathbb{U}}
\newcommand{\zed}{\mathbb{Z}}
\newcommand{\sgn}{\mathrm{sgn}}
\newcommand{\rNW}{\mathrm{NW}}
\newcommand{\rKL}{\mathrm{Kl}}
\newcommand{\E}{\mathbf{E}}
\newcommand{\one}{\mathbf{1}}
\newcommand{\sC}{{\mathscr{C}}}
\newcommand{\sF}{{\mathscr{F}}}
\newcommand{\sL}{{\mathscr{L}}}
\newcommand{\sM}{{\mathscr{M}}}
\newcommand{\sP}{{\mathscr{P}}}
\newcommand{\sX}{{\mathscr{X}}}
\newtheorem*{theorem*}{Theorem}
\newtheorem{theorem}{Theorem}
\newtheorem{cor}[theorem]{Corollary}
\newtheorem{lemma}[theorem]{Lemma}
\newtheorem{proposition}[theorem]{Proposition}
\theoremstyle{remark}
\newtheorem{rem}{Remark}
\title{The angle of large values of $L$-functions}
\author{Bob Hough}
\address{Department of Mathematics, Stanford University, 450 Serra Mall, 
Building 380, Stanford, CA, 94305}
\curraddr{Institute of Advanced Study, 1 Einstein Drive, Princeton, NJ 08540}
\email{hough@math.ias.edu}
\thanks{Research supported by ERC Grant 279438: Approximate algebraic structure 
and applications.}
\thanks{This project was conducted while the author was a 
postdoctoral research fellow at DPMMS Cambridge and then at Maths Institute, 
Oxford, and as a visitor to the Department of Math at Stanford. He thanks the 
three departments for their hospitality. }
\thanks{The author thanks the referee for careful comments.}
\subjclass[2010]{Primary 11M06, Secondary 11N60, 11K06, 11H06 }
\keywords{Riemann zeta function, Dirichlet $L$-functions, quantitative 
equidistribution}
\begin{document}

\begin{abstract}
We prove three results on the argument of large central values of $L$-functions. 
 The first establishes that there exists a sequence of quadratic Dirichlet 
characters $\chi_d$ and Dirichlet polynomials $T(\chi_d)$ truncating 
$L(\frac{1}{2}, \chi_d)$ at a length a power of $d$, such that the truncated sum 
is large and negative.  On the generalized Riemann Hypothesis this distinguishes 
the central point $\frac{1}{2}$ from fixed $\sigma > \frac{1}{2}$.  A result of 
Kalpokas, Korolev and Steuding establishes large values of the Riemann zeta 
function among $\{\zeta(\frac{1}{2} + it): t \in [T, 2T]\}$ with prescribed 
argument modulo $\pi$, with a weaker result modulo $2\pi$.  Our second result 
removes the condition modulo $\pi$.  Our third result proves an analogue in the 
family of central values of Dirichlet $L$-functions to fixed prime conductor. 
\end{abstract}

\maketitle

\section{Introduction}
The best known method for establishing extreme values of zeta, $L$-functions and 
sums of other arithmetic harmonics is the resonance method due to Soundararajan 
\cite{S08}.  
The resonance method is a first moment method which uses an auxiliary Dirichlet 
polynomial as an indicator for the large values of the sum of interest. For 
instance,  \cite{S08} gives the omega result\footnote{$A \gtrsim B$ 
means $\liminf \frac{A}{B} \geq 1$.}
\begin{equation}\label{resonance_omega}
 \sup_{t \in [T, 2T]} \log \left|\zeta\left(\frac{1}{2}+it\right)\right| \gtrsim 
\sqrt{\frac{\log T}{\log \log T}} , \qquad T \to \infty
\end{equation}
by optimizing
\begin{equation}\label{sound}
 \int_{T}^{2T} \zeta\left(\frac{1}{2} + it\right) \left|R\left(\frac{1}{2} + 
it\right)\right|^2 dt \bigg/ \int_T^{2T} \left|R\left(\frac{1}{2} + 
it\right)\right|^2 dt 
\end{equation}
over Dirichlet polynomials
\begin{equation}
 R(s) = \sum_{n \leq T^{1-\epsilon}} \frac{r(n)}{n^s}.
\end{equation}  
We further develop the resonance method in three families of $L$-functions by 
using it to prove omega results of the type (\ref{resonance_omega}) with the 
condition that the angle of the object is  constrained. Since the completion of 
this work, the result (\ref{resonance_omega}) has been improved in \cite{BS15}, 
with the same method, but taking a longer resonating Dirichlet polynomial.  We 
are presenting our results in any case, as we expect that they may still be of 
interest. 

  Let $d \geq 0$ be the fundamental discriminant associated to real quadratic 
field $\bQ(\sqrt{d})$, with associated real Dirichlet character $\chi_d$.  
Initially defined for $\Re(s) > 1$, the Dirichlet $L$-function 
\begin{equation}L(s,\chi_d) = 
\sum_{n \geq 1} \frac{\chi_d(n)}{n^s}\end{equation} 
extends holomorphically to $\bC$ and satisfies the functional equation 
\begin{equation}
\left(\frac{d}{\pi}\right)^{\frac{s}{2}} 
\Gamma\left(\frac{s}{2}\right)L(s,\chi_d) = \Lambda(s,\chi_d) =  \Lambda(1-s, 
\chi_d). 
\end{equation}
Within the critical strip $0 \leq \Re(s) \leq 1$ the approximate functional equation of analytic number theory then permits the representation of $L(s,\chi_d)$ as the sum of two Dirichlet polynomials, the product of whose length is roughly $d$.  As a special case
\begin{equation}\label{AFE_real_char}
 L\left(\frac{1}{2}, \chi_d\right) = 2\sum_{n \geq 1} \frac{\chi_d(n)}{n^{\frac{1}{2}}} V\left(\sqrt{\frac{\pi}{d}} n\right)
\end{equation}
where $V(x)$ is an appropriate smooth function on $\bR_{\geq 0}$ satisfying 
$V(0) = 1$, e.g. 
\begin{equation}
 V(x) = \frac{1}{2\pi i} \int_{\Re s = 1}x^{-s} \Gamma\left(\frac{s}{2} \right) 
 \frac{ds}{s - \frac{1}{2}}.
\end{equation}

The Generalized Riemann Hypothesis implies that $L(\sigma, \chi_d) \geq 0$ for 
$\sigma \geq \frac{1}{2}$, so that on GRH (\ref{AFE_real_char}) is non-negative. 
 Were we to replace the ratio $\frac{\chi_d(n)}{\sqrt{n}}$ with 
$\frac{\chi_d(n)}{n^\sigma}$ for any fixed $\sigma > \frac{1}{2}$ then again the 
GRH implies asymptotic positivity of shorter smoothed Dirichlet polynomials of 
length any power of $d$.   Thus we expect a mild bias towards $\chi_d(n) = 1$. 
Our first result proves a limitation to this effect.
\begin{theorem}\label{negative_truncation_theorem}
 Fix $\phi: \bR^+ \to [0,1]$ a smooth function satisfying $\phi \equiv 1$ in a 
neighborhood of 0 and $\phi$ is supported in $[0,1]$.  Let $0 < \delta < 
\frac{2}{9}$.   Set 
 \begin{equation}
  \eta = 0.36845 \min\left( \frac{\delta}{2}, \frac{1}{18} - 
\frac{\delta}{4}\right).
 \end{equation}
For large $D$ there exists fundamental discriminant $d \asymp D$ such that
 \begin{equation}
  \sum_n \frac{\chi_d(n)}{\sqrt{n}} 
\phi\left(\frac{n}{D^\delta}\right) < - \exp\left(\sqrt{\frac{\eta\log D}{\log 
\log D}}\right).
 \end{equation}

\end{theorem} 
\begin{rem}
 Naively one might expect a result of this nature to hold for any fixed $\delta 
< \frac{1}{2}$.  Deciding the behavior for $\delta = \frac{1}{2} + o(1)$ is an 
interesting question even experimentally.
\end{rem}

On GRH, Theorem \ref{negative_truncation_theorem} thus distinguishes between 
Dirichlet polynomial truncations to $L(\frac{1}{2}, \chi_d)$ of length 
$d^{\frac{1}{2}}$ and of a smaller power of $d$, and again distinguishes the 
shorter Dirichlet polynomial truncations with those of the same length at points 
to the right of $\frac{1}{2}$.

 Our proof uses a decomposition of the resonating Dirichlet polynomial as a 
convolution of multiplicative functions supported at small and large primes. See 
\cite{GS07} for somewhat related argument which produces negative truncations to 
the Dirichlet series for $L(1,\chi_d)$ of length on the scale of $\log d$.

Our second result concerns the argument of large values of $\zeta(\frac{1}{2} + it)$.  Combining the resonance method with a contour method of averaging $\zeta(\frac{1}{2} + it)$ over generalized Gram points where it has prescribed argument modulo $\pi$, Kalpokas, Korolev and Steuding \cite{KKS13} show that for any $\theta \in \bR/\zed$, for large $T$ there exists $t \asymp T$ with 
\begin{equation}
 \frac{1}{\pi}\arg\left(\zeta\left(\frac{1}{2} + it\right)\right) \equiv  \theta \bmod \zed, \qquad \Re \log \zeta\left(\frac{1}{2} + it\right) \gtrsim  \sqrt{\frac{\log T}{\log \log T}}.
\end{equation}
This method gives large values of $\zeta$ but with angle prescribed only modulo $\pi$.   By a somewhat different argument they remove this defect, but obtain only values of size $|\zeta(\frac{1}{2} +  it)| \gg (\log T)^{\frac{9}{4}}$.  Modifying their two methods we prove the following result.
\begin{theorem}\label{large_zeta_theorem}
 For all $\theta \in \bR/\zed$, for $T$ sufficiently large there is $t \asymp T$ satisfying
 \begin{equation}
  \frac{1}{2\pi}\arg\left(\zeta\left(\frac{1}{2} + it\right)\right) \equiv \theta \bmod \zed, \quad \Re \log \zeta\left( \frac{1}{2} + it\right) \gtrsim  \sqrt{\frac{\log T}{\log \log T}}.
 \end{equation}
\end{theorem}
\noindent
In this case, the proof goes by comparing the signed and unsigned first moments 
of $\zeta$ amplified by the resonator,  averaged over  generalized Gram points.

As in the case of $\zeta$, twice the argument of a primitive Dirichlet $L$-function at the central point is well understood.  Let $q \geq 3$ be a prime and let $\chi \bmod q$ be a non-principal character, with associated $L$-function
\begin{equation}L(s,\chi)= \sum_n \frac{\chi(n)}{n^s}, \qquad \Re(s) > 
1.\end{equation}  The Gauss sum associated to $\chi$ is
\begin{equation}
 \tau(\chi) = \sum_{a \bmod q} \chi(a) e\left(\frac{a}{q}\right),
\end{equation}
and the root number of the $L$-function  is \begin{equation} \epsilon_\chi = 
\frac{\tau(\chi)}{i^a\sqrt{q}}, \qquad a = \frac{1- \chi(-1)}{2},\end{equation} 
which is a complex number of modulus 1.  The completed $L$-function is 
\begin{equation}
 \Lambda(s,\chi) =\left(\frac{q}{\pi}\right)^{\frac{s}{2}} \Gamma\left(\frac{s + a}{2}\right) L(s,\chi), 
\end{equation}
which satisfies the function equation
\begin{equation}
\Lambda(s,\chi) =   \epsilon_\chi\Lambda(1-s, \overline{\chi}). 
\end{equation}
Define $\theta_\chi \in \bR/\zed$ by $e(\theta_\chi) |L(\frac{1}{2},\chi)| = L(\frac{1}{2}, \chi)$.  Thus $\frac{\tau(\chi)}{i^a\sqrt{q}} = e(2\theta_\chi)$. Katz \cite{K88} proves that the angles $\{\frac{\tau(\chi)}{\sqrt{q}}\}_{\chi \bmod q}$ are asymptotically equidistributed with respect to Haar measure on $\bS^1 = \{z \in \bC: |z| = 1\}$ in the limit as $q\to\infty$.  Our last theorem can be seen as an extension of Katz' result.
\begin{theorem}\label{dirichlet_L_angle_theorem}
    Let $F(x)$ be a growth function, satisfying  for all large $x$, $F(x) = o\left((\log x)^{\frac{1}{2}}\right)$.  For all primes $q > q_0( F)$, for all $\theta \in \bR/\zed$, for all $\delta  > \frac{1}{F(q)}$, there exists non-principal $\chi\bmod q$ such that,
 \begin{equation}
  \left\|\frac{1}{2\pi}\arg L\left(\frac{1}{2},\chi\right) - \theta \right\|_{\bR/\zed} \leq \delta, \qquad  \Re \log L\left(\frac{1}{2},\chi\right) \gtrsim \sqrt{\frac{ \log q}{32\log \log q}}.
 \end{equation}
\end{theorem}

In this case we use a one sided variant of Weyl's criterion to prove the uniform distribution.

Our proof of Theorem \ref{dirichlet_L_angle_theorem} uses an  asymptotic for the 
twisted fourth moment of Dirichlet $L$-functions.  This asymptotic, which 
extends the  fourth moment with power saving error term given in \cite{Y11}, is 
a concurrent result of the author that will appear elsewhere.
\begin{theorem}[Twisted fourth moment]\label{twisted_fourth_moment_theorem} Let $0 \leq \vartheta < \frac{1}{32}$ and let $1 \leq \ell_1, \ell_2 \leq q^{\vartheta}$ be square-free and satisfy $(\ell_1, \ell_2) = 1$.  Given $\alpha, \beta, \gamma, \delta \in \bC$, define for square-free $\ell$ the generalized divisor function
\begin{equation}
 \tau_{\alpha, \beta, \gamma, \delta}(\ell) = \prod_{p|\ell}\left(1 + \frac{p^{\gamma-\delta} \zeta_p(2 + \alpha + \beta + \gamma + \delta)}{\zeta_p(1 +\alpha + \gamma)\zeta_p(1 + \beta + \gamma)}\right)
\end{equation}
where $\zeta_p(s) = (1-p^{-s})^{-1}$. 
Denote
\begin{equation}
 X_u = \left(\frac{q}{\pi}\right)^{-u}\frac{\Gamma\left(\frac{\frac{1}{2}-u}{2}\right)}{\Gamma\left(\frac{\frac{1}{2}+u}{2}\right)}
\end{equation}
and, for $S$ a set of parameters, $X_S = \prod_{u \in S} X_u$. Write $\E_{\chi 
\bmod q}^+$ for expectation with respect to the even non-principal characters 
modulo $q$.

There exists $\eta > 0$ such that if $\alpha, \beta, \gamma, \delta \in \left\{z 
\in \bC: |z| < \frac{\eta}{\log q}\right\}$ then, for any $\epsilon > 0$, 
\begin{align}M(\alpha, \beta, \gamma, \delta; \ell_1,\ell_2):= 
\qquad\qquad&\\ \notag \E_{\chi\bmod q}^+ 
\Biggl[\chi(\ell_1)\overline{\chi}(\ell_2) 
L\left(\frac{1}{2} + \alpha, \chi \right)&L\left(\frac{1}{2} + \beta, \chi 
\right)L\left(\frac{1}{2} + \gamma, \overline{\chi} \right)L\left(\frac{1}{2} + 
\delta, \overline{\chi} \right) \Biggr]
=\\ \notag
 \frac{\tau_{ \alpha, \beta, \gamma, \delta}(\ell_1)\tau_{ \gamma, \delta, 
\alpha, \beta}(\ell_2)}{\ell_1^{\frac{1}{2} + \gamma} 
\ell_2^{\frac{1}{2}+\alpha}}&\frac{\zeta(1 +\alpha + \gamma)\zeta(1 + \alpha + 
\delta) \zeta(1 + \beta + \gamma) \zeta(1 + \beta + \delta)}{\zeta(2 +\alpha + 
\beta +\gamma+\delta)}&\\ \notag
 + X_{\alpha,\gamma}\frac{\tau_{-\gamma, \beta, -\alpha, \delta}(\ell_1) \tau_{-\alpha, \delta, -\gamma, \beta}(\ell_2)}{\ell_1^{\frac{1}{2} -\alpha} \ell_2^{\frac{1}{2} -\gamma}} 
 &\frac{\zeta(1-\alpha -\gamma) \zeta(1-\gamma +\delta) \zeta(1 -\alpha + \beta) 
\zeta(1+\beta + \delta)}{\zeta(2-\alpha + \beta -\gamma +\delta)}\\ \notag
 + X_{\beta,\gamma}\frac{\tau_{\alpha, -\gamma, -\beta, \delta}(\ell_1) \tau_{-\beta, \delta, \alpha, -\gamma}(\ell_2)}{\ell_1^{\frac{1}{2} -\beta} \ell_2^{\frac{1}{2} + \alpha}} 
 &\frac{\zeta(1+\alpha - \beta) \zeta(1+\alpha +\delta) \zeta(1-\beta-\gamma) 
\zeta(1-\gamma + \delta)}{\zeta(2+\alpha - \beta -\gamma +\delta)}\\ \notag
  + X_{\alpha,\delta}\frac{\tau_{-\delta, \beta, \gamma, -\alpha}(\ell_1) \tau_{\gamma, -\alpha, -\delta, \beta}(\ell_2)}{\ell_1^{\frac{1}{2} +\gamma} \ell_2^{\frac{1}{2} - \delta}}  
  &\frac{\zeta(1+\gamma-\delta) \zeta(1-\alpha -\delta) \zeta(1 + \beta + 
\gamma) \zeta(1-\alpha+\beta)}{\zeta(2-\alpha + \beta +\gamma -\delta)}\\ \notag
   + X_{\beta,\delta}\frac{\tau_{\alpha, -\delta, \gamma, -\beta}(\ell_1) \tau_{\gamma, -\beta, \alpha, -\delta}(\ell_2)}{\ell_1^{\frac{1}{2} +\gamma} \ell_2^{\frac{1}{2} + \alpha}} 
   &\frac{\zeta(1+\alpha +\gamma) \zeta(1+\alpha-\beta) \zeta(1 +\gamma - 
\delta) \zeta(1-\beta - \delta)}{\zeta(2+\alpha - \beta +\gamma 
-\delta)}\\\notag
     +  X_{\alpha, \beta, \gamma, \delta}\qquad\qquad\qquad\qquad\qquad 
&\\\notag \times \frac{\tau_{ -\gamma, -\delta, -\alpha, -\beta}(\ell_1)\tau_{ 
-\alpha, -\beta, -\gamma, -\delta }(\ell_2)}{\ell_1^{\frac{1}{2} -\alpha} 
\ell_2^{\frac{1}{2}-\gamma}}
     &\frac{\zeta(1 -\alpha - \gamma)\zeta(1 - \beta - \gamma) \zeta(1 - \alpha 
- \delta) \zeta(1 - \beta - \delta)}{\zeta(2 -\alpha - \beta -\gamma-\delta)}\\ 
\notag
  &+ 
O_\epsilon\left(\frac{\max(\ell_1,\ell_2)^{\frac{1}{2}}}{q^{\frac{1}{32}
-\epsilon
} \min(\ell_1,\ell_2)^{\frac{1}{2}}}\right).
\end{align}
\end{theorem}

\subsubsection*{Acknowledgments} The author thanks K. Soundararajan,  and D.R. Heath-Brown for their encouragement.

 \section*{Notation and conventions}
The analytic conductor of a family of $L$-functions refers to the quantity $D$ for fundamental discriminants, $T$ for $\zeta$ or $q$ for Dirichlet $L$-functions, determining the family of harmonics.
We consider parameters taken in the limit of growing conductor.  Given two such positive parameters,  $A \sim B$ means $\lim \frac{A}{B} = 1$, while   $A \lesssim B$ (resp. $A \gtrsim B$) means $\limsup \frac{A}{B} \leq 1$ (resp. $\liminf \frac{A}{B} \geq 1$).  The Vinogradov notation $A \ll B$ means $A = O(B)$ and $A \asymp B$ means $A\ll B$ and $B \ll A$.  

$d(n)$ denotes the number of divisors of positive integer $n$.  For $z \in \bC$, $d_z(n)$ is the generalized divisor function, defined as the coefficients in the Dirichlet series
\begin{equation}
 \sum_{n} \frac{d_z(n)}{n^s} = \zeta(s)^z, \qquad \Re(s) > 1.
\end{equation}
Thus $d(n) = d_2(n)$. $\mu=d_{-1}$ is the M\"{o}bius function, supported on square-free numbers and given for distinct primes $p_1, ..., p_k$ by $\mu(p_1...p_k) = (-1)^k$. $\omega(n)$ denotes the number of distinct prime divisors of positive integer $n$.

We write $\bS^1 = \{z \in \bC: |z| = 1\}$ and $\bT = \bR/\zed$.  The distance 
$\|\cdot\|_{\bR/\zed}$ on $\bT$ is  inherited from $\bR$.  We write the usual 
character $e:\bT \to \bS^1$, $e(\theta) = e^{2\pi i \theta}$.  Also, $c(\theta) 
= \cos(2\pi \theta)$.  

Contour integrals are abbreviated $\oint_{\Gamma} \cdot dz = \frac{1}{2\pi i} 
\int_{ \Gamma} \cdot dz$.

\section{The resonance method}\label{resonance_function_section}
Consider a family of harmonics $\sF$ with conductor $\sC$.  Thus $\sF = \{n 
\mapsto 
\chi_d(n): d \asymp D\}$ with conductor $D$ in the case of real characters, 
$\sF = \{n \mapsto n^{it}: t \asymp T\}$ with conductor $T$ in the case of 
$\zeta$,  or $\sF = \{n \mapsto \chi(n): \chi \bmod q \text{ non-principal}\}$ 
with conductor $q$ in the case of Dirichlet characters modulo $q$. The 
resonance method uses an auxiliary Dirichlet polynomial to isolate large values 
in the sum of the harmonic.  
Given $\sX \in \sF$, the resonator takes shape $R(\sX) = \sum_{n \leq N} r(n) 
\sX(n)$ 
for some arithmetic function $r$. The resonance method is based upon the 
inequalities
\begin{equation}\label{resonance_ratio}
\min_{\sX \in \sF}f(\sX) \leq  \E_R[f] = \int_{\sX \in \sF} f(\sX) |R(\sX)|^2 
\bigg/ \int_{\sX \in \sF} 
|R(\sX)|^2 \leq \max_{\sX \in \sF}f(\sX).
\end{equation}
The denominator in the formula for the expectation is referred to as the 
normalizing weight.

Let $N$, a power of $\sC$, be a parameter and set $L = \sqrt{\log N \log \log 
N}$.  We take as a starting point the multiplicative function $r(n)$ of 
\cite{S08}, which is supported on square-free numbers and defined at primes by
\begin{equation}
    r(p) = \left\{ \begin{array}{lll} \frac{L}{\sqrt{p} \log p} && L^2 \leq p 
\leq \exp((\log L)^2)\\ 0 && \text{otherwise} \end{array}\right. .
\end{equation}
We also set $
    r'(p) = \frac{r(p)}{1 + r(p)^2}$ and extend $r'$ to $\bN$ multiplicatively, 
with support on square-free numbers. Note that given $N$ is a fixed power of 
$\sC$, this function is essentially optimal for maximizing 
(\ref{resonance_ratio}) in the case that $f(\sX)$ is a function of type $f(\sX) 
= \sum_{n < \sC} \frac{\sX(n)}{\sqrt{n}}$ \cite{S08}.  We record several of the 
further properties of $r$.
   
\begin{lemma}\label{resonance_function_lemma} The function $r(n)$ satisfies the following properties.
 \begin{enumerate}
  \item[i.] \emph{Concentration.} Let $Y < \exp\left(\frac{L}{(\log 
L)^{5}}\right)$ and $Z > \exp\left(L (\log L)^{5}\right)$.  We have, for each 
fixed integer $m > 0$,
  \begin{align}\label{resonance_concentration}
   \sum_{Y \leq n < Z} \frac{r'(n) d_m(n)}{\sqrt{n}} &= \left(1 + 
O_{m}\left(\exp\left(-\frac{L}{(\log 
L)^3}\right)\right)\right)\prod_p\left(1 + \frac{m r'(p)}{\sqrt{p}}\right)
  \\ \notag& = \exp\left((m + o_m(1)) \frac{L}{2\log L}\right).
  \end{align}

  \item[ii.] \emph{Small tails.} With $Z$ as above, for any multiplicative $f$ satisfying $|f(p)| \leq m$, one has 
  \begin{equation}\label{resonance_small_tails_main}
   \left|\sum_{n \geq Z} \frac{r(n) f(n)}{\sqrt{n}}\right| \leq  \exp\left(-(1 + o_m(1)) \frac{\log Z}{(\log L)^3}\right). 
  \end{equation} Also, there is $c > 0$ such that, for multiplicative function 
$g$ 
satisfying $0 \leq g(p) \leq 1$ and for any integer $\ell \geq 1$, and for any  
$Z > N \exp\left(-\frac{\log N}{(\log \log N)^2}\right)$
  \begin{equation}\label{resonance_small_tails_square}
   \sum_{\substack{n < Z\\ (n, \ell) = 1}} r(n)^2 g(n) = \left(1 + O\left(\exp\left( - \frac{cL^2}{(\log L)^4}\right)\right)\right) \prod_{p\nmid \ell} \left(1 + r(p)^2 g(p)\right).
  \end{equation}

  \item[iii.] \emph{Mild roughness of support.}  If $1 < \ell < N$ satisfies $r(\ell)> 0$ then
  \begin{equation}\label{resonance_rough_support}
   \sum_{p|\ell} \frac{1}{p} = O\left(\frac{1}{(\log L)^2}\right).
  \end{equation}

 \end{enumerate}

\end{lemma}

\begin{rem} A number is called `smooth' if it is composed of many small prime factors, and `rough' if it is composed of only larger primes.  
The fact that the resonating function $r(n)$ is concentrated on primes somewhat 
larger than $\log \sC$ is a novel feature of the resonance method and it plays 
an important role especially in the proofs of Theorems 
\ref{large_zeta_theorem} and \ref{dirichlet_L_angle_theorem}.  
\end{rem}

\begin{proof}[Proof of Lemma \ref{resonance_function_lemma}]
Recall our convention that limiting statements are taken with respect to 
growing conductor.
 Let $F(s) = \sum_n \frac{b_n}{n^s}$ be a Dirichlet series with positive coefficients.  `Rankin's trick' refers to the bounds, 
 \begin{equation}
  \forall\; \alpha > 0,\; Y, Z \geq 1,\qquad \sum_{n \leq Y} b_n \leq Y^\alpha F(\alpha), \quad \sum_{n \geq Z} b_n \leq Z^{-\alpha} F(-\alpha). 
 \end{equation}
For $|\alpha| \leq \frac{1}{(\log L)^3}$, say, we have, by partial summation against the prime number theorem,
\begin{align}\notag
\log \sum_n \frac{r'(n) d_m(n)}{n^{\frac{1}{2} + \alpha}} - \log \sum_n 
\frac{r'(n) d_m(n)}{n^{\frac{1}{2}}}&=\log \prod_p \left(1 + \frac{m 
r'(p)}{p^{\frac{1}{2}+\alpha}} \right) - \log \prod_p \left(1 + \frac{m 
r'(p)}{\sqrt{p}} \right) \\&  \sim -m\alpha L \log \log L.
 \\ \notag \log \prod_p\left(1 + \frac{m r'(p)}{\sqrt{p}} \right) &\sim m 
\frac{L}{2\log L}.
 \end{align}
(\ref{resonance_concentration}) and (\ref{resonance_small_tails_main}) follow by 
choosing $\alpha = \pm \frac{1}{(\log L)^3}$ and applying Rankin's trick. 

(\ref{resonance_small_tails_square}) follows by choosing $\alpha = \frac{1}{(\log L)^3}$ in the estimate
\begin{align}
 &\log \prod_p \left(1 + r(p)^2 g(p) p^\alpha\right) - \log \prod_p \left(1 + 
r(p)^2 g(p)  \right) \\ \notag  &\leq  \alpha \left(\log N -(1 + 
o(1))\frac{\log N \log\log\log N}{\log \log N}\right),
\end{align}
which again is valid for $|\alpha| \leq \frac{1}{(\log L)^3}$.

To prove (\ref{resonance_rough_support}), note that $n < N$ in the support of $r$ has at most $\frac{\log N}{2\log L} \lesssim \frac{L^2}{4(\log L)^2}$ prime factors.  Since each has size at least $L^2$, the result follows.

\end{proof}

\subsection{The fractional divisor function}
Our proofs of Theorems \ref{large_zeta_theorem} and \ref{dirichlet_L_angle_theorem} use a short Dirichlet polynomial formed from the fractional divisor function.  This is defined as the coefficients in the Dirichlet series
\begin{equation}
 \sum_n \frac{d_z(n)}{n^s} = \zeta(s)^z, \qquad z \in \bC, \; \Re(s) > 1.
\end{equation}
We use the case $d_{\frac{1}{2}}(n)$.  This is a multiplicative function given at prime powers by
\begin{equation}
 d_{\frac{1}{2}}(p^k) = (-1)^k \binom{-\frac{1}{2}}{k} = \frac{1}{2^k k!}\prod_{i=1}^k (2i-1).
\end{equation}
In particular, 
\begin{equation}\label{decreasing_property}\forall\; k \geq 1,\qquad \frac{1}{2}d_{\frac{1}{2}}(p^{k-1}) \leq d_{\frac{1}{2}}(p^k) \leq d_{\frac{1}{2}}(p^{k-1}).\end{equation}
For $k, n \in \zed_{\geq 1}$ and $ x  \in \bR_{>1}$ we define the restricted divisor function
\begin{equation}
 d_{\frac{1}{2}, 2k, x}(n) = \sum_{\substack{n_1, ..., n_{2k} \leq x\\ n_1\cdots n_{2k} = n}} d_{\frac{1}{2}}(n_1)\cdots d_{\frac{1}{2}}(n_{2k}).
\end{equation}
We have $d_{\frac{1}{2}, 2k, x}(n) \leq d_k(n)$, with equality if $n \leq x$.  
This follows from the Euler product definition of $d_z(n)$ and from positivity 
of $d_{\frac{1}{2}}$.

We refer to (\cite{T95}, p. 184) for asymptotics in sums of products of divisor 
functions,  which are evaluated with use of the Hankel contour.  Note that the 
shape of the asymptotic is determined by the behavior of the relevant Dirichlet 
series at prime values, with the larger prime powers contributing only to the 
leading constant.

The following lemma makes  use of the roughness of support of the resonance function (see iii of Lemma \ref{resonance_function_lemma}).
\begin{lemma}\label{divisor_extraction} Let $r$ be the multiplicative function described above, associated to parameter $N$, and let $X < N$.  Let $\ell_1, \ell_2 < X^{\frac{1}{2}}$, $(\ell_1, \ell_2) = 1$, with $r(\ell_1), r(\ell_2) > 0$. We have the asymptotic evaluations
\begin{equation}\label{one_factor_lower_bound}
 \sum_{n < \frac{X}{\ell_1}} \frac{ d_{\frac{1}{2}}(\ell_1 n) }{n} \asymp d_{\frac{1}{2}}(\ell_1)  (\log X)^{\frac{1}{2}}
\end{equation}
and
\begin{equation}\label{two_factor_lower_bound}
  \sum_{n < \frac{X}{\max(\ell_1, \ell_2)}} \frac{d_{\frac{1}{2}}(\ell_1 n) d_{\frac{1}{2}}(\ell_2 n)}{n} \asymp d_{\frac{1}{2}}(\ell_1)d_{\frac{1}{2}}(\ell_2) (\log X)^{\frac{1}{4}},
\end{equation}
 the lower bound
\begin{equation}\label{divisor_lower_bound}
 \sum_{n < X} \frac{d(\ell_1n)d(\ell_2n)}{n} \gg d(\ell_1)d(\ell_2)(\log X)^4 
\end{equation}
and the upper bound
\begin{equation}\label{two_divisor_upper_bound}
 \sum_{\substack{n, m_1\\ \ell_2 m_1, \ell_1m_1n < X}} \frac{d_{\frac{1}{2}}(\ell_2m_1) d_{\frac{1}{2}}(\ell_1 m_1 n)}{m_1n} \ll  d_{\frac{1}{2}}(\ell_1)d_{\frac{1}{2}}(\ell_2) (\log X)^{\frac{3}{4}}.
\end{equation}

\end{lemma}
\begin{proof}
All of the upper bounds proceed in the same manner, so we just describe the first one.  Due to the support of $r$, $\ell_1$ is square-free, and so, applying both inequalities in (\ref{decreasing_property}) in turn,
\begin{align}
 \sum_{n < \frac{X}{\ell_1}} \frac{ d_{\frac{1}{2}}(\ell_1 n) }{n} &\leq 
d_{\frac{1}{2}}(\ell_1) \sum_{d|\ell_1} 2^{\omega(d)} \sum_{\substack{n < 
\frac{X}{\ell_1}\\ d|n}} \frac{d_{\frac{1}{2}}(n)}{n}\\& \notag \leq 
d_{\frac{1}{2}}(\ell_1) \prod_{p|\ell_1} \left(1 + \frac{2}{p}\right) 
\sum_{\substack{n < \frac{X}{\ell_1}}} \frac{d_{\frac{1}{2}}(n)}{n} \\& 
\notag \ll d_{\frac{1}{2}}(\ell_1) (\log X)^{\frac{1}{2}}  \prod_{p|\ell_1} 
\left(1 + \frac{2}{p}\right).
\end{align}
For the first line, write $d = (\ell,n)$ and $\ell = \ell' d$, and use the 
multiplicativity of $d_{\frac{1}{2}}$ together with (\ref{decreasing_property}).
The product is $1 + o(1)$ by the roughness of the support of $r$.

For the lower bound in (\ref{one_factor_lower_bound}), restrict to $(n, \ell_1)=1$ to obtain the bound
\begin{align}
(\ref{one_factor_lower_bound})& \geq d_{\frac{1}{2}}(\ell_1) \sum_{\substack{n 
\leq \frac{X}{\ell_1}\\ (n, \ell_1) = 1}}  \frac{ d_{\frac{1}{2}}( n) }{n} 
\\ \notag &\geq d_{\frac{1}{2}}(\ell_1)\left( \sum_{n \leq 
\frac{X}{\ell_1}}\frac{ d_{\frac{1}{2}}( n) }{n} - \sum_{p |\ell_1} \sum_{n 
\leq \frac{X}{p\ell_1}}\frac{ d_{\frac{1}{2}}( pn) }{pn} \right)\\ \notag
& \geq  d_{\frac{1}{2}}(\ell_1)\left( \sum_{n \leq \frac{X}{\ell_1}}\frac{ d_{\frac{1}{2}}( n) }{n}\right) \left(1 - \sum_{p|\ell_1} \frac{1}{p}\right).
\end{align}
This suffices, since the sum over $n$ is $\gg (\log X)^{\frac{1}{2}}$, while the sum over $p$ is $o(1)$ by the roughness of support of $r$.  The proof of the lower bound in (\ref{two_factor_lower_bound}) is similar.

\end{proof}

Combining the results of this section, we prove an estimate which will be of later use.
Let $\delta> 0$ be a small constant, and form the convolution
\begin{equation}
 \forall\; n \leq N^{1 + \delta}, \qquad a_n = \sum_{\substack{n_1n_2 = n\\ n_1 \leq N, \; n_2 \leq N^\delta}} r(n_1)\frac{d_{\frac{1}{2}}(n_2)}{\sqrt{n_2}}. 
\end{equation}

\begin{lemma}\label{eval_an_square_lemma}
  Recall that we define $r'(p) = \frac{r(p)}{1 + r(p)^2}$. It holds
 \begin{equation}
  \sum_{n \leq N^{1 + \delta}} a_n^2 \asymp_\delta (\log N)^{\frac{1}{4}} \prod_p \left(1 + r(p)^2\right) \prod_p \left(1 + \frac{r'(p)}{\sqrt{p}}\right).
 \end{equation}
\end{lemma}

\begin{proof}
 Expand $\sum_{n\leq N^{1 + \delta}} a_n^2$, writing $g = \GCD(\ell_1,\ell_2)$ 
and replacing $\ell_1:= \frac{\ell_1}{g}$ and $\ell_2 := \frac{\ell_2}{g}$, to 
obtain
 \begin{align}
  &\sum_g r(g)^2\sum_{\substack{\ell_1, \ell_2 \leq \frac{N}{g}\\ 
(\ell_1,\ell_2) = (\ell_1\ell_2, g) = 1}} r(\ell_1)r(\ell_2) 
\sum_{\substack{n_1, n_2 \leq N^\delta\\ \ell_1n_1 = \ell_2n_2}} 
\frac{d_{\frac{1}{2}}(n_1) d_{\frac{1}{2}}(n_2)}{\sqrt{n_1n_2}}\\ \notag
  &= \sum_g r(g)^2\sum_{\substack{\ell_1, \ell_2 \leq \frac{N}{g}\\ (\ell_1,\ell_2) = (\ell_1\ell_2, g) = 1}} \frac{r(\ell_1)r(\ell_2)}{\sqrt{\ell_1\ell_2}} \sum_{n\leq \frac{N^\delta}{\max(\ell_1,\ell_2)}} \frac{d_{\frac{1}{2}}(\ell_1n) d_{\frac{1}{2}}(\ell_2n)}{n}.
 \end{align}
For $\max(\ell_1, \ell_2) < Z := \exp\left((\log N)^{\frac{2}{3}}\right)$, 
(\ref{two_factor_lower_bound}) of Lemma \ref{divisor_extraction} gives that the 
inner summation is $\asymp_\delta d_{\frac{1}{2}}(\ell_1) 
d_{\frac{1}{2}}(\ell_2) (\log N)^{\frac{1}{4}}.$  The concentration properties 
of the resonator function given in Lemma \ref{resonance_function_lemma} then 
give
\begin{align}
& (\log N)^{\frac{1}{4}} \sum_{\substack{\ell_1, \ell_2 < Z\\ (\ell_1, \ell_2) = 
1}} \frac{r(\ell_1)r(\ell_2)d_{\frac{1}{2}}(\ell_1) d_{\frac{1}{2}}(\ell_2) 
}{\sqrt{\ell_1\ell_2}} \sum_{\substack{g \leq \frac{N}{\max(\ell_1,\ell_2)}\\ 
(g, \ell_1\ell_2)=1}}r(g)^2\\& \notag \sim (\log N)^{\frac{1}{4}} 
\prod_p\left(1 + \frac{r'(p)}{\sqrt{p}}\right) \prod_p (1 + r(p)^2),
\end{align}
so that we obtain the main term. 
To verify this estimate, first replace the sum over $g$ with an Euler 
product at primes co-prime to $\ell_1\ell_2$, bounding the error using 
(\ref{resonance_small_tails_square}) and (\ref{resonance_concentration}). 
Replacing $r$ with $r'$ accounts for the missing factors.  Now extend the sum 
over $\ell_1, \ell_2$ by removing the condition on $Z$. In the tail where 
$\max(\ell_1, \ell_2) > Z$, bound the sum over $n$ trivially by $\ll \log N$, 
and use the small tails property from Lemma \ref{resonance_function_lemma} to 
show that this contribution is negligible.
\end{proof}

\section{Negative truncation of $L(\frac{1}{2}, \chi_d)$}\label{truncation_section}
Our main result of this section, Theorem \ref{negative_truncation_theorem}, 
shows that under GRH a fixed Dirichlet series thought of as a function of the 
character $\chi_d$ of length a small power of $d$ is not sufficient to 
approximate $L\left(\frac{1}{2}, \chi_d\right)$ pointwise.  By way of 
comparison, assuming GRH we check  that in the region $\left(\sigma - 
\frac{1}{2}\right) \log \log d \to \infty$, a polynomial of length any power of 
$d$ suffices.
\begin{proposition}
 Fix a smooth function $\phi : \bR^+ \to [0,1]$ satisfying $\phi \equiv 1$ in a 
neighborhood of 0 and $\phi$ is supported in $[0,1]$.  Let $d > 0$ be a large 
fundamental discriminant and assume the Riemann Hypothesis for $L(s,\chi_d)$.  
There are positive constants $C_1, C_2 > 0$, such that if $(\sigma - 
\frac{1}{2}) > \frac{C_1}{\log \log d}$ and if $x > \exp\left(C_2 (\log 
d)^{2-2\sigma} + \log\log d\right)$ then
 \begin{equation}
  \sum_n \frac{\chi_d(n)}{n^\sigma} \phi\left(\frac{n}{x}\right) \sim L(\sigma, \chi_d) > 0.
 \end{equation}
\end{proposition}

\begin{proof}
Assuming the Riemann hypothesis, the bound, in $\sigma > \frac{1}{2}$
 \begin{equation}
  \left|\Re(\log L(\sigma+it, \chi_d))\right| \leq C \frac{(\log (d+|t|))^{2-2\sigma}}{\log \log (d+|t|)} +  \log\log (d + |t|)
 \end{equation}
is classical.  For the best known constants in this estimate for the case of the Riemann zeta function, see \cite{CC11}.  Write
\begin{equation}
 \tilde{\phi}(w) = \int_0^\infty \phi(x) x^{w-1}dx.
\end{equation}
This function has a simple pole at 0 of residue 1.  Integrating by parts, it satisfies the estimate
\begin{equation}
\forall |\Re(w)|\leq 2, \; \forall A > 0, \qquad |\tilde{\phi}(w)| \ll_A \frac{1}{|w|^A}.
\end{equation}
By Mellin inversion,
\begin{equation}
 \sum_n \frac{\chi_d(n)}{n^\sigma} \phi\left(\frac{n}{x}\right) = \oint_{\Re w = 
1} x^w L(\sigma + w, \chi_d) \tilde{\phi}(w)dw.
\end{equation}
Shift the contour to $\Re(w) = -\frac{C'}{\log \log D} > \frac{1}{2}-\sigma$,  
picking up a residue of $L(\sigma, \chi_d)$ from the pole at 0. For an 
appropriate combination of the $C$ and $C'$, the remaining  integral is 
$o(L(\sigma, \chi_d))$.
\end{proof}

We now prove Theorem \ref{negative_truncation_theorem}.
\subsection{The signed resonance function}
Our resonating function in this section deviates slightly from those used in the 
latter two sections.  Set $x = D^{\frac{2}{9}-\delta}$, $0 < \delta < 
\frac{2}{9}$ for the length of our truncated $L$-function,
\begin{equation}
T(\chi_{8d}) = \sum_n \frac{\chi_{8d}(n)}{\sqrt{n}} \phi\left(\frac{n}{x} 
\right).
\end{equation}
Define resonance parameters $N = \min(D^{\frac{\delta}{2}}, 
x^{\frac{1}{4}})$ and $L = \sqrt{\log N \log \log N}$.  With $Z = xN^2$ 
introduce the resonating polynomial
\begin{equation}
 R(\chi_{8d}) = \sum_{n \leq Z} r^*(n) \chi_{8d}(n).  
\end{equation}
We consider the probability measure on fundamental discriminants of form 
$8d$, $\frac{D}{2} \leq d < D$ given by 
\begin{equation}\label{def_D_expectation}
 \E_R[f] = {\sum_{\substack{\frac{D}{2} \leq d < D}}}\mu(2d)^2 f(\chi_{8d}) 
|R(\chi_{8d})|^2 \bigg/ {\sum_{\substack{\frac{D}{2} \leq d < D}}}\mu(2d)^2  
|R(\chi_{8d})|^2.
\end{equation}

In order to achieve a negative expectation, choose a resonating 
multiplicative function $r^*$ which is the convolution $r^* = r^- \ast r^+$ of  
multiplicative functions supported on small and large primes.  The idea is to 
choose $r^-$ so that its summatory function essentially vanishes, while 
convolution with $r^+$ negatively correlates with a sequence of partial sums of 
$r^-$ which are oscillatory and large.

Let $\sP$ be the set of primes and let the small primes be 
\begin{equation}\sP^- = \sP \cap [L^2, \exp((\log L)^2)].\end{equation}  The 
function $r^-$ is given by the multiplicative function $r$ defined in Section 
\ref{resonance_function_section} multiplied by a phase function.  Write $\bT = 
\bR/\zed$, $c(\theta) = \cos(2\pi \theta)$  and let $\chi: \bT \to [-1,1]$ be 
defined by 
\begin{equation}\label{def_chi}\chi(\theta) = \sgn(c( \theta))( 2|c( 
\theta)|-1) \one\left\{|c(\theta)| > \frac{1}{2}\right\}.\end{equation}
 We set
\begin{equation}
 r^-(p) = \left\{ \begin{array}{lll} r(p) \chi\left(\frac{\log p}{4\log 
L}\right) && p \in \sP^-\\0 && \text{otherwise} \end{array}\right..
\end{equation}
Note that $r^-$ is non-positive at the initial sequence of primes where it is 
non-zero.  Since it is supported on small primes its 
summatory function at large argument behaves like an Euler product, which is 
small.  The phase in $\log p$ is responsible for large partial sums at smaller 
argument, a feature which is detected via $r^+$. 

  The definition of $r^+$ is less explicit because it depends upon fluctuations 
in partial sums of the function $r^-(n)$.  Let $B$, $x^{1-\epsilon} < B < x$ be 
a parameter to be determined, let the large primes be $\sP^+ = \sP \cap 
[\frac{B}{4}, B)$, and set
\begin{equation}
 r^+(p) = \left\{\begin{array}{lll} \frac{\epsilon_p}{\sqrt{p}\log x} && p \in 
\sP^+\\ 0&&\text{otherwise}\end{array}\right.,
\end{equation}
where for $p \in \sP^+$, $\epsilon_p = \pm 1$ is at our disposal.  In 
particular, $r^*$ is given by
\begin{equation}
 r^*(p) = \left\{ \begin{array}{lll} \frac{L \chi(\frac{\log p}{4\log 
L})}{\sqrt{p}\log p} && p \in \sP^-\\
                 \frac{\epsilon_p}{\sqrt{p} (\log x)} && p \in \sP^+\\
                 0 && p \not \in \sP^- \cup \sP^+
                \end{array}\right..
\end{equation}

We note that 
\begin{equation}\label{large_prime_square_sum}
 \sum_{p \in \sP^+} r^+(p)^2 = o(1).
\end{equation}
This feature is used to guarantee that $r^+$ does not contribute significantly 
to the mean square of $R(\chi_{8d})$ appearing in the denominator of 
(\ref{def_D_expectation}). The proof of Theorem 
\ref{negative_truncation_theorem} demonstrates this, and that expectation in 
the numerator is essentially linear and typically large as a function of the 
signs $\epsilon_p$.

\subsection{Determination of expectation}
The characters $\chi_{8d}$ satisfy the following orthogonality relation.
\begin{lemma}[Orthogonality relation, \cite{RS06} Lemma 3.1]
 Let $n$ be square.  We have for all $\epsilon > 0$,
 \begin{equation}\label{diagonal_eval}
  \sum_{\frac{D}{2} < d \leq D} \mu^2(2d) \chi_{8d}(n) = \frac{3D}{\pi^2}  
\prod_{p |2n}\left(\frac{p}{p+1}\right) + O_\epsilon\left(D^{\frac{1}{2} + 
\epsilon}n^\epsilon\right).
 \end{equation}
 If $n$ is not square then
 \begin{equation}
  \sum_{d < D} \mu^2(2d) \chi_{8d}(n)= 
O\left(D^{\frac{1}{2}}n^{\frac{1}{4}}\log n \right).
 \end{equation}
 \end{lemma}

 Using this lemma we give the asymptotic evaluation of the normalizing weight 
and expectation of $T(\chi_{8d})$.
 
 \begin{lemma}
 We have the following asymptotic evaluation of the normalizing weight.
 \begin{equation}
\rNW:= \sum_{\frac{D}{2} < d \leq D} \mu(2d)^2 |R(\chi_{8d})|^2 \sim 
\frac{2D}{\pi^2}  \prod_{p \in \sP^-} \left(1 + \frac{p}{p+1}r^-(p)^2\right). 
 \end{equation}
\end{lemma}

\begin{proof}
Expanding the square,
 \begin{equation}
\sum_{\frac{D}{2} < d \leq D} \mu(2d)^2 |R(\chi_{8d})|^2=  \sum_{n_1, n_2 \leq 
Z} r^*(n_1)r^*(n_2) \sum_{\frac{D}{2} < d \leq D}\mu(2d)^2\chi_{8d}(n_1n_2).
 \end{equation}
Recall that $r^*$ is supported on odd square-free numbers.  A diagonal term 
arises from $n_1 = n_2$, see (\ref{diagonal_eval}).  Define $\tilde{r}(p) = 
\sqrt{\frac{p}{p+1}} r^*(p)$ and extend $\tilde{r}(n)$ multiplicatively, 
supported on odd square-free numbers.  The diagonal term is
\begin{equation}
 \frac{2D}{\pi^2}  \sum_{n \leq Z} \tilde{r}(n)^2 = \frac{2D}{\pi^2}  
\sum_{\substack{n \leq Z\\ (n, \sP^+)=1}} \tilde{r}(n)^2 + O\left(D \sum_{p \in 
\sP^+} \tilde{r}(p)^2 \sum_{n \leq \frac{Z}{p}} \tilde{r}(n)^2\right).
\end{equation}
The leading factor of 2 accommodates the prime 2. 
Note that $Z \leq x^{\frac{3}{2}}$ so that for $p \in \sP^+$, $n \leq 
\frac{Z}{p}$ is free of prime factors from $\sP^+$. Thus,
since $\sum_{p \in \sP^+} \tilde{r}(p)^2 = o(1)$ the error term is negligible 
compared to the main term.  Since $Z >N$ the main term  is $\sim  
\frac{2}{\pi^2} D \prod_{p \in \sP^-} (1 + \tilde{r}(p)^2)$ by 
(\ref{resonance_small_tails_square}) of Lemma \ref{resonance_function_lemma}.

The off-diagonal terms are bounded by
\begin{align}
 &\ll_\epsilon D^{\frac{1}{2}+\epsilon}\sum_{n_1, n_2<Z} 
|r^*(n_1)r^*(n_2)|(n_1n_2)^{\frac{1}{4}}\\\notag &\ll_\epsilon D^{\frac{1}{2} + 
\epsilon}Z^{\frac{3}{2}}\sum_{n_1, n_2} 
\frac{|r^*(n_1)||r^*(n_2)|}{\sqrt{n_1n_2}}.
\end{align}
Since $Z \leq x D^\delta$ and $x = D^{\frac{2}{9}-\delta}$ the off-diagonal is 
$O_\epsilon\left(D^{\frac{5}{6}+\epsilon}\right)$, bounding the sum over $n_1, 
n_2$ using (\ref{resonance_concentration}).
\end{proof}

\begin{proposition}\label{numerator}  Define arithmetic function $a_n$ by the 
Dirichlet series
\begin{align}
\label{def_F_s}F(s) &= \sum_n \frac{a_n}{n^s}\\
\notag&= \prod_{p \in \sP^-}\left(1 + \frac{2 
r^-(p)}{p^{\frac{1}{2}+s}(\frac{p+1}{p} + r^-(p)^2)} 
\frac{p^{2s+1}}{p^{2s+1}-1}+ \frac{1 + r^-(p)^2}{(p^{2s+1}-1)(\frac{p+1}{p} + 
r^-(p)^2)} \right)\\
&\notag\times \frac{1}{2^{2s+1}-1}\prod_{p \not \in \sP^-, \text{ odd}} \left(1 
+ \frac{p}{p+1}\frac{1}{p^{2s+1}-1}\right).
\end{align}
The normalized expectation of $T(\chi_{8d})$ satisfies $\E_R[T(\chi_{8d})] 
\sim \Sigma_1 + O(\Sigma_2) + o(1)$, where
\begin{align}
  \Sigma_1  = \frac{2}{ \log x}\sum_{p \in \sP^+}\frac{\epsilon_p}{p } \sum_{n} 
a_n   \phi\left(\frac{np}{x}\right), \qquad \Sigma_2 =  
\sum_{n}a_n\phi\left(\frac{n}{x}\right).
\end{align}
\end{proposition}

\begin{proof}
Expand the square in the numerator and pass the sum over $d$ inside to obtain
\begin{align}
 &\rNW \cdot \E_R[T(\chi_{8d})] = \sum_{\ell_1, \ell_2 \leq Z} 
r^*(\ell_1)r^*(\ell_2) \sum_{n }\frac{\phi\left(\frac{n}{x}\right)}{\sqrt{n}}  
\sum_{\frac{D}{2} < d \leq D} \mu^2(2d) \chi_{8d}(\ell_1\ell_2n).
\end{align}
Recall that $r^*$ is supported on square-frees.  Pull out the GCD $g= (\ell_1, 
\ell_2)$, writing $\ell_1' = \frac{\ell_1}{g}$, $\ell_2' = \frac{\ell_2}{g}$.   
We obtain a diagonal term coming from $n$ of form $\ell_1' \ell_2' m^2$, see 
(\ref{diagonal_eval}).  This term is
\begin{equation}\label{diagonal}
 \frac{2D}{\pi^2} \sum_{\substack{\ell = \ell_1\ell_2  \\ (\ell_1,\ell_2) = 1}} 
\frac{r^*(\ell)}{\sqrt{\ell}} \sum_{\substack{ g\leq \frac{Z}{\max(\ell_1, 
\ell_2)}\\ (g, \ell)= 1}} r^*(g)^2 \sum_{m }  \frac{\phi\left(\frac{\ell 
m^2}{x}\right)}{m} \prod_{\substack{p | \ell m g\\ \text{odd}}} \frac{p}{p+1}.
\end{equation}
The off-diagonal terms are bounded by 
\begin{align}
 &D^{\frac{1}{2} + \epsilon} \sum_{\ell_1,\ell_2 \leq Z} \sum_{n \leq x} 
\frac{|r^*(\ell_1)| |r^*(\ell_2)|(\ell_1 \ell_2)^{\frac{1}{4}}}{n^{\frac{1}{4}}} 
\\\notag&\ll_\epsilon D^{\frac{1}{2} + \epsilon}x^{\frac{3}{4}} Z^{\frac{3}{2}} 
\sum_{\ell_1, \ell_2} \frac{|r^*(\ell_1) 
r^*(\ell_2)|}{\sqrt{\ell_1\ell_2}}\ll_\epsilon D^{\frac{1}{2} + 
\frac{3\delta}{2} +  \epsilon} x^{\frac{9}{4}}= O_\epsilon\left(D^{1 - 
\frac{3\delta}{4} +\epsilon}\right).
\end{align}

Before proceeding further, we  comment that the diagonal sum (\ref{diagonal}) is 
bounded absolutely by (see Lemma \ref{resonance_function_lemma}, 
(\ref{resonance_concentration}) for the second upper bound)
\begin{align}
 &\ll D \log D \prod_{p \in \sP^+ \cup \sP^-} \left(\left(1 + 
\frac{2|r^*(p)|}{\sqrt{p}}\right)\left(1 + 
\frac{p}{p+1}r^*(p)^2\right)\right)\\& \notag \leq \rNW 
\exp\left(O\left(\sqrt{\frac{\log D}{\log \log D}}\right)\right) ,
\end{align}
so that, even though the sum contains terms of differing sign, we may make 
relative errors on the order of 
\begin{equation}1 + O\left(\exp\left(-\sqrt{\log D}\right)\right)\end{equation} 
within individual terms without altering the final asymptotic.

Bearing this in mind, we split the diagonal term (\ref{diagonal}) into two sums 
$\Sigma_1^0 + \Sigma_2^0$ according as $\ell$ does or does not have a factor $p 
\in \sP^+$ (the support of $\phi$ guarantees that it has at most one).  In the 
former case, we have 
\begin{align}
 \Sigma_1^0 &= \frac{2D}{\pi^2}\sum_{p \in \sP^+} \frac{2r^+(p)}{\sqrt{p}} 
\sum_{\ell < \frac{2x}{p}}   \frac{r^-(\ell)}{\sqrt{\ell}} \sum_{m } 
\frac{\phi\left(\frac{\ell p m^2}{x}\right)}{m}  \sum_{\ell_1 \ell_2 = 
\ell}\sum_{\substack{ g \leq \frac{Z}{p \ell_1} \\ (g, \ell) = 1}} r^*(g)^2 
\prod_{\substack{p' | \ell m g\\ \text{odd}}} \frac{p'}{p'+1}
\end{align}
Notice that $g < \frac{Z}{p\ell_1}$ implies $g \leq x^{\frac{1}{2} + \epsilon}$ 
so that $g$ has no factors from $\sP^+$.  Also   
\begin{equation}\frac{Z}{p\ell_1} \geq \frac{Z}{2x}=\frac{N^2}{2},\end{equation} 
so that (\ref{resonance_small_tails_square}) of Lemma 
\ref{resonance_function_lemma} implies that to within admissible relative error,
\begin{align}
 &\sum_{\substack{ g \leq \frac{Z}{p \ell_1} \\ (g, \ell) = 1}} \left(r^-(g)^2 
\prod_{\substack{p' | g\\ p' \nmid m}} \frac{p'}{p'+1}\right)\sim
 \prod_{\substack{p' \in \sP^-\\ p' \nmid \ell m}} \left(1 + 
\frac{p'}{p'+1}r^*(p')^2\right) \prod_{\substack{p' \in \sP^-\\ p' |m, p' \nmid 
\ell}} (1 + r^*(p')^2).
\end{align}
  Substituting this evaluation into $\Sigma_1^0$ and dividing by $\rNW$ we obtain
\begin{align}
 \frac{\Sigma_1^0}{\rNW} + o(1) = \frac{2}{\log x} \sum_{p \in \sP^+} \frac{\epsilon_p}{p} \sum_\ell \frac{d(\ell) r^-(\ell)}{\sqrt{\ell}\prod_{p'|\ell} \left(\frac{p'+1}{p'}+ r^-(p')^2\right)}\sum_m \frac{b(\ell,m)}{m} \phi\left(\frac{p\ell m^2}{x} \right),
 \end{align}
 where
 \begin{align}
  b(\ell,m) = \prod_{\substack{p' \in \sP^-\\ p'|m, p' \nmid \ell}}\left(\frac{1 + r^-(p')^2}{\frac{p'+1}{p'} + r^-(p')^2} \right) \prod_{\substack{p' |m\\ p' \not \in \sP^-, \text{ odd}}}\left(\frac{p'}{p'+1} \right).
\end{align}
Comparing this with the Dirichlet series $F(s)$, we see that we obtain 
  $\Sigma_1$ from the Proposition.

It remains to treat the sum $\Sigma_2^0$, and this we do by splitting the sum further as $\Sigma_2^0 = \Sigma_2^1 + \Sigma_2^2,$ depending on whether or not $g$ has a factor from  $\sP^+$.  We first handle the case that $g$ does not contain such a factor, which we call $\Sigma_2^1$.  We have 
\begin{equation}
 \Sigma_2^1 = \frac{2D}{\pi^2} \sum_{\substack{\ell < 2x\\ (\ell, \sP^+) = 1}} \frac{r^-(\ell)d(\ell)}{\sqrt{\ell}}  \sum_{\substack{ (g, \ell) = 1\\(g, \sP^+) = 1}} r^-(g)^2 \sum_{m } \frac{\phi\left(\frac{\ell m^2}{x}\right)}{m} \prod_{\substack{p |\ell m g\\ \text{odd}}} \frac{p}{p+1}.
\end{equation}
Instead of the divisor function $d(\ell)$ we should have included a sum over $\ell_1 \ell_2 = \ell$, with the restriction on $g$ that $g < \frac{Z}{\max(\ell_1, \ell_2)}$, but this may be removed, again by (\ref{resonance_small_tails_square}) of Lemma \ref{resonance_function_lemma}.  Writing the sum over $g$ as a product and dividing by $\rNW$ we arrive at $\Sigma_2$.

It remains to treat $\Sigma_2^2$.  Here we have
\begin{align}
 \Sigma_2^2 &= \frac{2D}{\pi^2} \sum_{p \in \sP^+} r^+(p)^2 \sum_{\substack{\ell_1 \ell_2 = \ell < 2x\\ (\ell, \sP^+) = 1}} \frac{r^-(\ell)}{\sqrt{\ell}} \sum_{m } \frac{\phi\left(\frac{\ell m^2}{x}\right)}{m}  \sum_{\substack{g \leq \frac{Z}{p \max(\ell_1, \ell_2)}\\(g, \ell)=1}} r^-(g)^2 \prod_{\substack{p' | \ell m g\\ \text{odd}}} \frac{p'}{p'+1}.
\end{align}
We wish to again replace the sum over $g$ with a product, but this is only valid 
if the  sum is sufficiently long.  Since $\ell$ restricts the length of $g$, we 
first bound in absolute value the contribution of all terms with 
\begin{equation}\ell > \min\left(D^{\frac{\delta}{4}}, x^{\frac{1}{8}}\right)= 
\sqrt{N}.\end{equation}  This is negligible:
\begin{align}
 &\ll D (\log D)^{-2} \prod_{p' } (1 + r^*(p')^2) \sum_{ \substack{\ell > \sqrt{N}\\ (\ell, \sP^+) = 1}} \frac{|r^-(\ell)|d(\ell)}{\sqrt{\ell}}\\&\ll \rNW  \exp\left(-c \frac{\log D}{(\log \log D)^3}\right)
\end{align}
by applying (\ref{resonance_small_tails_main}) of Lemma \ref{resonance_function_lemma}.    

Restrict to terms with $\ell < \sqrt{N}$. The condition $g \leq 
\frac{Z}{p\max(\ell_1, \ell_2)}$ implies $g \leq x^{\frac{1}{2} + \epsilon},$ so 
$g$ is free of prime factors from $\sP^+$.  Also, 
\begin{equation}\frac{Z}{p\max(\ell_1, \ell_2)}\geq \frac{Z}{p\sqrt{N}} > 
N^{\frac{3}{2}},\end{equation} so that we may now replace the sum over $g$ with 
a product by again applying (\ref{resonance_small_tails_square}) of Lemma 
\ref{resonance_function_lemma}.  

Having done this, we may now reinsert those terms $\sqrt{N}<\ell<2x$ that are composed solely of prime factors from $\sP^-$, again with negligible error.  With these adjustments, we now find that
\begin{equation}
 \Sigma_2^2/\rNW =  \Sigma_2 \times \sum_{p \in \sP^+} r^+(p)^2,
\end{equation}
and since $\sum_{p \in \sP^+} r^+(p)^2 = o(1)$, the proof is complete. 
\end{proof}
 
 Before proceeding further, we prove the following properties of the Dirichlet series $F(s)$, whose definition we recall for convenience.
 \begin{align}
F(s)&= \prod_{p \in \sP^-}\left(1 + \frac{2 
r^-(p)}{p^{\frac{1}{2}+s}(\frac{p+1}{p} + r^-(p)^2)} 
\frac{p^{2s+1}}{p^{2s+1}-1}+ \frac{1 + r^-(p)^2}{(p^{2s+1}-1)(\frac{p+1}{p} + 
r^-(p)^2)} \right)\\ \notag
&\times \frac{1}{2^{2s+1}-1}\prod_{p \not \in \sP^-, \text{ odd}} \left(1 + \frac{p}{p+1}\frac{1}{p^{2s+1}-1}\right),
\end{align}
 \begin{lemma}\label{r_star_properties}
 The Dirichlet series $F(s)$ factors as $\zeta(2s+1)G(s)H(s)$ with 
 \begin{equation}
  H(s) = \prod_{p \in \sP^-} \left(1 + \frac{2r^-(p)}{p^{\frac{1}{2} + s}\left(\frac{p+1}{p} + r^-(p)^2 \right)}\right).
 \end{equation}
 These functions satisfy the following properties.
 \begin{enumerate}
  \item [i.] \emph{$G(s)$ is approximately constant.} Define $\log G(s)$ by 
continuous variation from $+\infty$. For $\Re(s) \geq \sigma_0 > 
-\frac{1}{2}$, $|\log G(s)|$ is bounded by an absolute constant depending only 
upon $\sigma_0$. 
  \item [ii.] \emph{$H(s)$ approximately vanishes at 0.} There is constant 
$c_1>0$ such that, for $|s| \leq \frac{1}{(\log L)^{1+\epsilon}}$, 
 \begin{equation}
  \Re \log H(s) \leq -c_1 \frac{L}{\log L}.
 \end{equation}
 \item [iii.] \emph{$H(s)$ is not too large.} There is a constant $c_2 > 0$ such that for $\Re(s) \geq \frac{-1}{(\log L)^{1+\epsilon}}$
 \begin{equation}
  \Re \log H(s) \leq c_2 \frac{L}{\log L}.
 \end{equation}
 \item [iv.] \emph{Approximate saddle point.} There is $c_3 > 0.36845$ such that for $\sigma = \frac{1}{(\log x)^2}$ and all $t$ such that $\left|t - \frac{\pi}{ 2\log L}\right| \leq \frac{1}{(\log L)^{1+\epsilon}}$, 
 \begin{equation}
  \log \frac{|F(\sigma + it)|^2}{\prod_{p \in \sP^-} \left(1 + \frac{2|r^-(p)|}{\sqrt{p}}\right)} \geq (c_3 + o(1)) \frac{L}{2\log L}.
 \end{equation}
 \item[v.] \emph{$L^1$ control.} For $y \geq 1$ we have
 \begin{equation}
  \sum_{n \leq y} |a_n| \ll \log (1 + y) \exp\left(\frac{L}{2\log L}\right).
 \end{equation}

 \end{enumerate}

\end{lemma}
\begin{proof}
i. Let the term at $p \in \sP^-$ of $H(s)$ be $H_p(s)$.  We have
\begin{align}
  G(s) =& \prod_{p \in \sP^-}\left(1  - \frac{1}{p^{2s+2}}\frac{1 
}{\frac{p+1}{p} + r^-(p)^2}\frac{1}{H_p(s)}\right) \prod_{p \not \in \sP^-, 
\text{ odd}}\left(1 -\frac{1}{(p+1)p^{2s+1}} \right).
\end{align}
For $\Re(s) \geq \sigma_0 > \frac{-1}{2}$, $H_p(s) =1+ o(1)$.  In particular, the product defining $G(s)$ is absolutely convergent, and its logarithm is bounded by an absolute constant.

ii. When $|s| < \frac{1}{(\log L)^{1 + \epsilon}}$, partial summation against the prime number theorem gives
\begin{align}
 \log H(s) &= L\int_{L^2}^{\exp((\log L)^2)}\frac{\chi\left(\frac{\log x}{4\log L}\right)}{x (\log x)^2} dx + o\left(\frac{L}{\log L}\right).
\\\notag &= \frac{L}{2\log L} \int_1^\infty 
\frac{\chi\left(\frac{u}{2}\right)du}{u^2} + o\left(\frac{L}{\log L}\right).
 \end{align}
Recall that $\chi$ is given by \begin{equation}\chi(\theta) = \sgn(c( \theta))( 
2|c( \theta)|-1) \one\left\{|c(\theta)| > \frac{1}{2}\right\}.\end{equation} 
 In each interval $[2n-1, 2n+1]$, $n \geq 1$ the integral against $u$ is negative, as follows by convexity.
 
 iii. This follows as in ii. by partial integration against the prime number theorem.
 
 iv. We have $\log F(s) = O(\log \log x) + \log H(s)$.  By the same partial summation argument as above, 
 \begin{align}
  &2 \Re \log H(s) - \log \prod_{p \in \sP^-} \left(1 + 
\frac{2|r^-(p)|}{\sqrt{p}} \right) \\\notag&= \frac{L}{2\log L} \int_1^\infty 
\frac{2 c\left(\frac{u}{2}\right) \chi\left(\frac{u}{2}\right) - 
\left|\chi\left(\frac{u}{2}\right)\right|}{u^2} du + o\left(\frac{L}{\log 
L}\right)\\\notag
  &= \frac{L}{2\log L} \int_1^\infty \frac{(2 \left|c\left(\frac{u}{2}\right)\right| -1)^2 \one\{\|u\|_{\bR/\zed} \leq \frac{1}{3}\}}{u^2} du + o\left(\frac{L}{\log L}\right).
 \end{align}
 The numerical value of the integral is greater than $0.36845$.

 v. Write $H(s) = \sum_n \frac{b_n}{n^s}$ and $\zeta(2s+1)G(s) = \sum_n \frac{c_n}{n^s}$.  Then 
 \begin{equation}
  \sum_{n \leq y}|a_n| \leq \sum_n |b_n| \sum_{n \leq y} |c_n| \ll \exp\left(\frac{L}{2\log L}\right) \sum_{n \leq y} |c_n|.
 \end{equation}
The last sum is $\ll 1+\log y$, as can be checked by Perron summation.
\end{proof}
 
\subsection{Negative values from the convolution}
The remainder of the proof of Theorem \ref{negative_truncation_theorem} is concerned with the smooth sums 
\begin{align}\label{def_S}
  S(y)& := \sum_{n} a_n\phi\left(\frac{n}{y}\right).
 \end{align}
 Note  that the two quantities from Proposition \ref{numerator} are expressed as
 \begin{equation}
  \Sigma_1 = \frac{2}{\log x}\sum_{p \in \sP^+} \frac{\epsilon_p}{p}S\left(\frac{x}{p}\right), \qquad \Sigma_2(x) = S(x).
 \end{equation}
 
 We also write $S^*(y) = \sum_{n \leq y} |a_n|$
for the sharp truncation.

 \begin{proposition}\label{sigma_2_prop}
Uniformly in $y \geq 2$ there exist $C_1, C_2 > 0$ such that 
 \begin{align} S(y) &\ll \log y\exp\left(-C_1\sqrt{\frac{\log D}{\log \log 
D}}\right) + \exp\left( - \frac{\log y}{(\log \log D)^2} + C_2 \sqrt{\frac{\log 
D}{\log \log D}}\right).
 \end{align}
 In particular, $\Sigma_2 = o(1)$.
 Furthermore, uniformly in $y \geq 0$ there is $C>0$ such that
 \begin{equation}\label{derive_S_y}
  \left|y \frac{\partial}{\partial y} S(y)\right| \ll_\phi S^*(y), \qquad S^*(y) \leq \log(1 + y) \exp\left(C\sqrt{\frac{\log D}{\log \log D}}\right).
 \end{equation}

\end{proposition}

\begin{proof}
The bound for the derivative of $S(y)$ follows on differentiating under the summation and considering the support of $\phi$.  The bound for $S^*(y)$ is given in Lemma \ref{r_star_properties}.

We now estimate $S(y)$.  The conditions on $\phi$ guarantee that its Mellin 
transform $\tilde{\phi}$ has a single simple pole at 0 with residue 1, and that 
for $s$ such that $ |\Re(s)|\leq 1$ and $|\Im(s)|\geq 1$ it satisfies the decay 
property \begin{equation}\forall A>0, \qquad \left|\tilde{\phi}(s)\right| \ll_A
|s|^{-A}.\end{equation}   Therefore, by Mellin inversion,
\begin{equation}\label{sigma_2_contour}
 S(y) =\oint_{\Re(s) = \frac{1}{\log x} } y^s \tilde{\phi}(s) F(s) ds.
\end{equation}

Shift the contour to the line $\Re(s) = \frac{-1}{(\log \log D)^2}$, writing
\begin{align}
 S(y) & =  \oint_{|s| = \frac{1}{\log \max(x,y)}} \zeta(2s+1)G(s)H(s) y^s 
\tilde{\phi}(s) ds \\\notag & \qquad\qquad +  \oint_{\Re(s) = \frac{-1}{(\log 
\log D)^2}} \zeta(2s+1)G(s)H(s) y^s \tilde{\phi}(s) ds
\end{align}
where the first term captures the residue at 0. Recall that $|\log G(s)|$ is 
uniformly bounded in $\Re(s) > \frac{-1}{4}$.  Using the rapid decay of  
$\tilde{\phi}$ to bound the vertical contour and estimating 
$\left|\tilde{\phi}(s)\right|, |\zeta(1 + 2s)| \ll \frac{1}{|s|}$ on the 
circular contour, we find that
\begin{align}\label{S_2_terms}
 |S(y)| &\ll (\log \max(x,y)) \sup_{|s| = \frac{1}{\log \max(x,y)}} |H(s)|\\& \qquad \qquad + \frac{(\log \log D)^2}{\exp\left(\frac{\log y}{(\log \log D)^2}\right)} \sup_{\Re(s) =\frac{-1}{(\log \log D)^2}} |H(s)|. \notag
\end{align}

By the estimates proven in Lemma \ref{r_star_properties} there are constants 
$c_1, c_2 > 0$ such that \begin{align}&\sup_{|s| = \frac{1}{\log \max(x,y)}} 
|H(s)| < \exp\left(-c_1 \sqrt{\frac{\log D}{\log \log D}}\right),  
\\\notag &\sup_{\Re(s) =\frac{-1}{(\log \log D)^2}} |H(s)| < \exp\left( c_2 
\sqrt{\frac{\log D}{\log \log D}}\right),\end{align} completing the estimate.
\end{proof}

For $p \in \sP^+$ we choose $\epsilon_p = -\sgn(S(\frac{x}{p}))$, so that 
\begin{equation}
 \Sigma_1 = - \frac{2}{\log x}\sum_{\frac{B}{4} \leq p < B} \frac{1}{p}\left|S\left(\frac{x}{p}\right)\right|
\end{equation}
By partial summation against the prime number theorem, we obtain
\begin{equation}
 \Sigma_1 = -\frac{2}{\log x} \int_{\frac{B}{4}}^B \left|S\left(\frac{x}{t}\right)\right|\frac{dt}{t \log t} +o(1).
\end{equation}

We make one further reduction. For technical reasons it is convenient to replace 
$\phi$ with a function of compact support on $\bR^+$, so set $\psi(x) = \phi(x) 
- \phi\left(\frac{x}{2}\right)$, and define $\tilde{S}(y) = \sum_n a_n 
\psi\left(\frac{n}{y}\right).$  Noting $\left|\tilde{S}(y)\right| \leq |S(y)| + 
\left|S\left(2y\right)\right|$ gives
\begin{equation}
 \Sigma_1 \leq -\frac{1}{\log x} \int_{\frac{B}{2}}^B \left|\tilde{S}\left(\frac{x}{t}\right)\right|\frac{dt}{t \log t} +o(1).
\end{equation}
Replacing $\frac{x}{t}=: y$ we complete the proof of Theorem 
\ref{negative_truncation_theorem} by choosing $B=A$ in the following 
proposition.
\begin{proposition}\label{large_S}
 Let $\eta = c_3 \min\left(\frac{\delta}{2}, \frac{1}{18} - 
\frac{\delta}{4}\right)$ where $c_3 > 0.36845$ in the constant of Lemma 
\ref{r_star_properties}.  There 
exists $2 \leq A \leq U := \exp(\sqrt{\log x}(\log \log x)^2) $, such that
 \begin{equation}
  \int_{\frac{A}{2}}^A \left|\tilde{S}(y)\right| \frac{dy}{y} \geq 
\exp\left(\sqrt{\frac{\eta \log D}{\log \log D}}\right)
 \end{equation}

\end{proposition}

For the proof, we appeal to the following modified version of Gallagher's large sieve (see e.g. \cite{B74}, p. 29).  
\begin{lemma}\label{gallagher_lemma}
 Let $\psi(x) = \phi(x) - \phi(\frac{x}{2})$ as above.  Define also $\psi_\sigma(x) = x^\sigma \psi(x)$.  Let $P(t) = \sum_{n } a_n n^{-it}$ be a Dirichlet series for which $\sum |a_n| < \infty$.  There exists a constant $\alpha= \alpha(\psi) > 0$  such that, uniformly in $|\sigma| < \frac{1}{2}$, 
 \begin{equation}
  \int_{-\alpha}^\alpha |P(t)|^2 dt \ll_\psi \int_{\frac{1}{2}}^{\infty} \left|\sum_n a_n \psi_\sigma\left(\frac{n}{y}\right)\right|^2 \frac{dy}{y}.
 \end{equation}

\end{lemma}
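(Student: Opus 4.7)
The plan is to prove Proposition \ref{large_S} in four steps: a lower bound for an $L^2$ integral of $F(it)$ coming from a resonance in $H(it)$, Gallagher's lemma (Lemma \ref{gallagher_lemma}) converting this to a lower bound on $\int_2^U |\tilde S|^2\,dy/y$, the uniform upper bound $|\tilde S(y)| \ll S^*(y)$ from Lemma \ref{r_props} to convert $L^2$ to $L^1$, and finally dyadic pigeonhole.

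First I would locate and exploit a resonance in $H(it)$. Splitting the cosine as $\cos(\log p/(2\log L)) = \tfrac12(e^{i\log p/(2\log L)}+e^{-i\log p/(2\log L)})$, the contribution of a prime to $\log H(it)$ splits as a sum of two terms proportional to $p^{-i(t\mp 1/(2\log L))}$. At the resonant frequency $t_0 := 1/(2\log L)$ one of these exponentials becomes unoscillating, and a direct prime number theorem calculation gives
\[
\log |H(it_0)| = (1+o(1))\kappa_0 \frac{L}{\log L}, \qquad \kappa_0 := \frac12\!\left(\frac{12}{35\pi}+\int_{5\pi/6}^{7\pi/6}\!\frac{\cos 2u}{u^2}\, du\right)>0,
\]
where the leading $12/(35\pi) = \int_{5\pi/6}^{7\pi/6} u^{-2}\,du$ is the ``DC'' contribution from the resonance. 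Since $|\zeta(1+2it_0)|\asymp \log L$ and $G$ is bounded by Lemma \ref{Dirichlet_series}, a saddle-point analysis of $|H(it)|$ near $t_0$ (its exponent has a quadratic maximum of curvature $\asymp L\log L$ in $t$) together with the factorization $F=\zeta GH$ gives
\[
\int_{-\alpha}^\alpha |F(it)|^2\, dt \gg \exp\!\left((2\kappa_0 + o(1))\frac{L}{\log L}\right).
\]

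Next I would apply Gallagher's lemma. Because $F$ has a pole at $s=0$ its coefficients violate the $\sum|a_n|<\infty$ hypothesis of Lemma \ref{gallagher_lemma}, so I would truncate the $m$-sum in (\ref{def_F}) at $m\le M := 2\sqrt U$. For every $y\in[2,U]$ the support condition $\psi(\ell m^2/y)\ne 0$ forces $m^2\le 4y/\ell\le 4U$, so this truncation leaves $\tilde S(y)$ unchanged on $[2,U]$; and since $t_0\gg 1/\log M$ the lower bound of Step 1 persists for the truncated $F_M$. Gallagher's lemma with $\sigma = 0$ then yields
\[
\int_{1/2}^\infty |\tilde S(y)|^2 \frac{dy}{y} \;\gg\; \int_{-\alpha}^\alpha |F_M(it)|^2\, dt.
\]
Proposition \ref{sigma_2_prop} shows $|\tilde S(y)|$ decays super-polynomially for $y>U$ (the value $U=\exp(\sqrt{\log x}(\log\log x)^2)$ is calibrated exactly so the second term of Proposition \ref{sigma_2_prop} dominates), so the bulk of this $L^2$ integral is concentrated on $[2,U]$.

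Finally I would convert $L^2$ to $L^1$ and pigeonhole. Lemma \ref{r_props} (and the $S^*$ bound recorded at the start of Section 3) gives
\[
|\tilde S(y)|\leq 2 S^*(y) \leq \exp\!\left((C_0+o(1))\frac{L}{\log L}\right), \qquad C_0 := \int_{5\pi/6}^{7\pi/6}\frac{|\cos u|}{u^2}\,du,
\]
so $\int|\tilde S|^2\le \sup|\tilde S|\cdot\int|\tilde S|$ gives
\[
\int_2^U|\tilde S(y)|\frac{dy}{y} \;\gg\; \exp\!\left((2\kappa_0-C_0+o(1))\frac{L}{\log L}\right).
\]
Partitioning $[2,U]$ into $O(\log U)$ dyadic intervals and pigeonholing selects $A$ with $\int_{A/2}^A|\tilde S|\,dy/y$ at least this quantity divided by $\log U$; since $\log U=O(\sqrt{\log x}(\log\log x)^2)$ is sub-exponential in $\sqrt{\log D/\log\log D}$, it is absorbed into the $o(1)$ in the exponent. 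The main obstacle is the concrete numerical inequality $2\kappa_0>C_0$, i.e.\
\[
\frac{12}{35\pi}+\int_{5\pi/6}^{7\pi/6}\frac{\cos 2u}{u^2}\, du \;>\;\int_{5\pi/6}^{7\pi/6}\frac{|\cos u|}{u^2}\,du;
\]
both sides are elementary and a direct computation gives a definite positive gap, yielding $c(a) = (2\kappa_0-C_0)\sqrt{2(2/9-a)}>0$ via $L/\log L = (1+o(1))\sqrt{2\delta\log D/\log\log D}$, so that $c(a)\to 0$ as $a\to 2/9$ exactly as anticipated.
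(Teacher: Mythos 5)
Your proposal does not prove the statement it was asked to prove. The statement in question is Lemma \ref{gallagher_lemma} itself --- the sieve inequality
\[
\int_{-\alpha}^{\alpha}|P(t)|^2\,dt \ll_\psi \int_{1/2}^{\infty}\Bigl|\sum_n a_n\psi_\sigma\bigl(\tfrac{n}{y}\bigr)\Bigr|^2\frac{dy}{y}
\]
--- but your write-up treats this inequality as a known black box (``Next I would apply Gallagher's lemma\dots'') and instead outlines the proof of Proposition \ref{large_S} and the final lower bound for $|F(\sigma+it)|$. However interesting that outline may be, it leaves the actual target entirely unproved: nowhere do you explain why the mean square of $P$ over a short $t$-interval is controlled by the weighted $L^2$ norm of the smoothed partial sums.

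The missing argument is short but has real content. One expands the square on the right-hand side to get $\sum_{n_1,n_2}a_{n_1}\overline{a_{n_2}}\int\psi_\sigma(n_1/y)\psi_\sigma(n_2/y)\,dy/y$; the substitution $y=e^u$ identifies the inner integral as the autocorrelation $H_\sigma(\log(n_1/n_2))$ of $f_\sigma(u)=\psi_\sigma(e^{-u})$. Since $\hat H_\sigma=|\hat f_\sigma|^2\ge 0$, Fourier inversion writes the double sum as $\int|\hat f_\sigma(\xi)|^2\,|P(2\pi\xi)|^2\,d\xi$, and positivity lets one discard all but $|\xi|<2\pi\alpha$. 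The remaining point --- and the reason the lemma is stated with a constant $\alpha=\alpha(\psi)$ and uniformity in $|\sigma|\le\frac12$ --- is that $|\hat f_\sigma(\xi)|$ must be bounded below for $|\xi|\le 2\pi\alpha$ uniformly in $\sigma$. This uses that $\psi$ is compactly supported in $\bR^+$ (because $\phi\equiv 1$ near $0$, so $\psi=\phi(x)-\phi(x/2)$ vanishes near $0$ as well as for large $x$), hence $f_\sigma$ is supported in a fixed interval $[-C,C]$ and one may take $\alpha\le\frac{1}{10\pi C}$ so that $\hat f_\sigma(\xi)$ stays close to $\hat f_\sigma(0)=\int f_\sigma\ne 0$. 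None of this appears in your proposal, so as an answer to the stated problem it is a complete gap. (Separately, your truncation of $F$ before applying the lemma and the explicit constants $\kappa_0$, $C_0$ concern Proposition \ref{large_S}, not this lemma, and would need to be checked against the paper's different, non-numerical route through the final lemma of Section 3.)
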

\begin{proof}
Note that $\psi$ has compact support in $\bR^+$ since $\phi$ has compact support on $\bR$ and $\phi \equiv 1$ on a neighborhood of zero.  

The right hand side is equal to
 \begin{equation}\label{RHS_sieve}
  \sum_{n_1, n_2} a_{n_1}\overline{a_{n_2}} \int_0^\infty \psi_\sigma\left(\frac{n_1}{y}\right)\psi_\sigma\left(\frac{n_2}{y}\right) \frac{dy}{y}.
 \end{equation}
Set $f_\sigma(u) = \psi_\sigma(e^{-u})$ and set $H_\sigma(x) = 
\int_{-\infty}^\infty f_\sigma(u) f_\sigma(u+x) du$.  The Fourier transform of 
$H_\sigma$ is given by $\hat{H_\sigma}(\xi) = 
\left|\hat{f_\sigma}(\xi)\right|^2$, from which it follows that 
(\ref{RHS_sieve}) is given by
\begin{align}
\sum_{n_1, n_2} a_{n_1}\overline{a_{n_2}} H_\sigma\left(\log \frac{n_1}{n_2}\right) &= \sum_{n_1, n_2} a_{n_1}\overline{a_{n_2}} \int_{-\infty}^\infty |\hat{f_\sigma}(\xi)|^2 \left(\frac{n_1}{n_2}\right)^{2\pi i \xi} d\xi
\\\notag &\gg \inf_{|\xi| < 2\pi \alpha, |\sigma| \leq \frac{1}{2}} 
\left|\hat{f_\sigma}(\xi)\right|^2 \times \int_{-\alpha}^\alpha |P(t)|^2 dt.
\end{align}
Let $f$ have support in $[-C, C]$.  Then choose $\alpha \leq \frac{1}{10 \pi 
C}$, say, to guarantee that $\left|\hat{f_\sigma}(\xi)\right|$ is bounded below 
by a  constant depending at most on $\psi$.
\end{proof}

\begin{proof}[Proof of Proposition \ref{large_S}]
Write, as usual,  $F(s) = \sum_n \frac{a_n}{n^s}$.  Let $\sigma = \frac{1}{(\log 
x)^2}$.  We apply Lemma \ref{gallagher_lemma} to the Dirichlet series 
\begin{equation}P(t) = F(\sigma + it) \end{equation} with function 
$\psi_{\sigma}$.  This yields
\begin{equation}
 \int_{-\alpha}^\alpha |F(\sigma + it)|^2 dt\ll \int_{\frac{1}{2}}^\infty 
\left|\tilde{S}(y)\right|^2 \frac{dy}{y^{1 + 2\sigma}}.
\end{equation}
Bounding $\left|\tilde{S}(y)\right| \leq |S(y)| + |S(2y)|$, and applying the 
bound 
\begin{equation}
S(y) \ll \log y\exp\left(-C_1\sqrt{\frac{\log D}{\log \log D}}\right) + \exp\left( - \frac{\log y}{(\log \log D)^2} + C_2 \sqrt{\frac{\log D}{\log \log D}}\right)
\end{equation}
proven in Proposition \ref{sigma_2_prop}, we have  (recall $U = \exp(\sqrt{\log x}(\log \log x)^2)$)
\begin{equation}
 \int_{\frac{1}{2}}^\infty \left|\tilde{S}(y)\right|^2 \frac{dy}{y^{1 + 
2\sigma}} \ll \int_{\frac{1}{2}}^U \left|\tilde{S}(y)\right|^2 \frac{dy}{y} + 
o(1) 
\end{equation}
By dyadic decomposition, we obtain that there is $A$, $2 \leq A \leq U$ such that
\begin{equation}
 \int_{\frac{A}{2}}^A \left|\tilde{S}(y)\right| \frac{dy}{y}\gg \frac{1}{\log U 
\sup_{y <U}\left|\tilde{S}(y)\right|}\int_{-\alpha}^\alpha |F(\sigma + it)|^2 
dt - O(1).
\end{equation}
Since (see the proof of v. of Lemma \ref{r_star_properties}) 
\begin{equation}\left|\tilde{S}(y)\right| \ll \log y \sum_{(n, \sP^+)=1} 
\frac{d(n)\left|\tilde{r}(n)\right|}{\sqrt{n}},\end{equation} the proof follows 
from the 
estimate, for all $t$ such that $\left|t - \frac{\pi}{ 2\log L}\right| \leq 
\frac{1}{(\log L)^{1+\epsilon}}$, 
 \begin{equation}
  \log \frac{|F(\sigma + it)|^2}{\prod_{p \in \sP^-} \left(1 + \frac{2|r^-(p)|}{\sqrt{p}}\right)} \geq (c_3 + o(1)) \frac{L}{2\log L}.
 \end{equation}  given in
item iv. of Lemma \ref{r_star_properties}.
\end{proof}
\section{The argument of large values of $\zeta\left(\frac{1}{2}+it\right)$}
The method of Kalpokas, Korolev and Steuding \cite{KKS13} for treating $\zeta\left(\frac{1}{2} + it\right)$ at points where it has a prescribed angle makes essential use of the fact that the argument of $\zeta\left(\frac{1}{2} + it\right)^2$ is a simple function to describe.  For $\theta \in [0, \pi)$, denote 
\begin{align}
T_\theta &= \left\{t \in \bR: \arg\left(\zeta\left(\frac{1}{2} + 
it\right)\right) \equiv \theta \bmod \pi\right\}\\ \notag
 T_{\theta, +} &= \left\{t \in \bR: \arg\left(\zeta\left(\frac{1}{2} + 
it\right)\right) = \theta\right\}\\ \notag
 T_{\theta, -} &= \left\{t \in \bR: \arg\left(\zeta\left(\frac{1}{2} + it\right)\right) = \theta + \pi\right\}.
 \end{align}
Let $\bU_{T,\theta}$ be the uniform probability measure on $T_\theta \cap 
[0,T]$.
Up to boundedly many exceptions all contained within a compact neighborhood of $\frac{1}{2}$, $T_\theta$ is exactly the solution set of
\begin{equation}
\Delta(s) = \frac{\zeta\left(\frac{1}{2}+s\right)}{\zeta\left(\frac{1}{2}-s\right)} = \pi^{s}\frac{\Gamma\left(\frac{1}{4}-\frac{s}{2}\right)}{\Gamma\left(\frac{1}{4} + \frac{s}{2}\right)} = e^{2i\theta}.
\end{equation}
Using this to express the discrete moments as a contour integral, \cite{KKS13} shows 
\begin{align}
 \E_{\bU_{T, \theta}} \left[\left|\zeta\left(\frac{1}{2} + it\right)\right|^3 
\right] &\gg (\log T)^{\frac{9}{4}}, \\ \notag \E_{\bU_{T,\theta}} \left[ 
\zeta\left(\frac{1}{2} + it\right)^3 \right] &= O((\log T)^2)
\end{align}
from which it follows
\begin{align}
 &\E_{\bU_{T,\theta}}\left[ \left|\zeta\left(\frac{1}{2} + it\right)\right|^3 
\cdot\one\left\{\arg\left(\zeta\left(\frac{1}{2} + 
it\right)\right)=\theta\right\}\right]\\\notag& = 
\frac{1}{2}\E_{\bU_{T,\theta}}\left[\left|\zeta\left(\frac{1}{2} + 
it\right)\right|^3 + e^{-3i\theta} \zeta\left(\frac{1}{2} + it\right)^3\right] 
\gg (\log T)^{\frac{9}{4}},
\end{align}
and similarly for the expectation restricted to points at which $\arg\left(\zeta\left(\frac{1}{2} + it\right)\right) = - \theta.$

Following \cite{S08} we augment this argument by weighting the expectations with a resonating Dirichlet polynomial.  We also estimate the first moment rather than the third, which makes a technical simplification to the argument.  

 Set $H = T/(\log T)^2$.
 Introduce probability measure $w_{T,\theta}(t)$ on $T_\theta$ given by
\begin{equation}
 w_{T,\theta}(t) = \frac{\left|R\left( it\right)\right|^2}{\cosh(\frac{t-T}{H})} \bigg/ \sum_{t' \in T_\theta} \frac{\left|R( it')\right|^2}{\cosh(\frac{t'-T}{H})}
\end{equation}
with $R(s)$ a resonating polynomial.  Let $r$ be the multiplicative function of Section \ref{resonance_function_section} with parameter $N = T^{1-2\xi}$ and define
\begin{equation}
 R^*(s) = \sum_{n < N}\frac{r(n)}{n^s}, \qquad A_{\frac{1}{2}}(s) = \sum_{n < T^{\xi}} \frac{d_{\frac{1}{2}}(n)}{n^s}.
\end{equation}
Our resonating polynomial is
\begin{equation}\label{def_R}
 R(s) = R^*(s) A_{\frac{1}{2}}\left(\frac{1}{2} + s\right) =: \sum_{n < T^{1-\xi}} \frac{a_n}{n^s}.
\end{equation}
We  show the following pair of estimates.
\begin{proposition}\label{main_prop}
 We have
 \begin{equation}\label{absolute_lower_bound}
 \E_{w_{T,\theta}}\left[ \left|\zeta\left(\frac{1}{2} + it\right)\right|\right] \gg_\xi (\log T)^{\frac{3}{4}} \prod_p \left(1 + \frac{ r'(p)}{\sqrt{p}}\right)
\end{equation}
and
\begin{equation}\label{signed_upper_bound}
\left|\E_{w_{T,\theta}} \left[\zeta\left(\frac{1}{2} +it\right) \right] \right| \ll_\xi (\log T)^{\frac{1}{2}} \prod_p \left(1 + \frac{ r'(p)}{\sqrt{p}}\right).
\end{equation}
\end{proposition}

Since \begin{equation}\prod_p \left(1 +\frac{ r'(p)}{\sqrt{p}}\right) = 
\exp\left(\sqrt{\frac{\log N}{\log \log N}}\right)\end{equation}
letting first $T \to \infty$ then $\xi \downarrow 0$ we obtain  Theorem 
\ref{large_zeta_theorem}.

\subsection{Sums involving Gram points} Throughout the remainder of this 
argument we let $\epsilon>0$ be an arbitrarily small fixed constant. We write 
$\Gamma_\epsilon$ for the contour $\Re(s) = \frac{1}{2} + \epsilon$  plus 
$\Re(s) = -\frac{1}{2} - \epsilon$, oriented positively with respect 
to $s = 0$. To within a negligible error, the sums that we need 
over the set $T_\theta$ may be expressed as contour integrals over 
$\Gamma_\epsilon$ 
against the kernel $\frac{\Delta'(s)}{(\Delta(s) - e^{2i\theta})}\frac{ds}{ \cos\left(\frac{ iT - s}{H}\right)}$. 
The following lemma allows us to evaluate integrals of this type.  
\begin{lemma}\label{delta_integral_lemma}
 Let $T$ be large, let $1 \leq m, n$ and assume $m < T^{1-\xi}$. Let $\epsilon > 0$ arbitrary.  We have the following evaluation of integrals.
 
 For any $\omega \in \bS^1 = \{z \in \bC: |z|=1\}$  and for any $A > 0$,
 \begin{align}\label{k_bigger_0}
  \oint_{\Re(s) = \frac{1}{2} + \epsilon} \left(\frac{m}{n}\right)^s \frac{\Delta'}{\Delta}(s) \frac{\Delta(s)}{1-\omega \Delta(s)} \frac{ds}{\cos\left(\frac{ iT - s}{H}\right)} &= O_{\xi, A}\left(T^{-A}\right).
  \end{align}
 Also,
\begin{align}
  \label{k_equal_0}
  &\oint_{\Re(s) = \frac{1}{2} + \epsilon} \left(\frac{m}{n}\right)^s \frac{\Delta'}{\Delta}(s) \frac{ds}{\cos\left(\frac{iT \pm s}{H}\right)} \\\notag&=\delta_{m=n} \left\{-\int_{t \geq 20} \frac{\log \frac{t}{2\pi} + O(\frac{1}{t}) }{\cosh\left(\frac{t-T}{H}\right)}dt \right\} + O_{\xi, A}\left(T^{-A}\right).
  \end{align}
\end{lemma}

\begin{proof}
We use the following consequences of Stirling's formula, which are valid for $|t| >1$
\begin{align}
 \forall\;  \sigma > 0, \qquad &\Delta\left(\sigma + it\right) = 
O_\sigma\left(|t|^{-\sigma}\right), \\ \notag
  &\frac{\Delta'}{\Delta}\left(it\right) = - \log \frac{|t|}{2\pi} + 
O\left(\frac{1}{|t|}\right)\\ \notag
  \forall\; j \geq 1, \qquad & \frac{d^j}{dt^j} \frac{\Delta'}{\Delta}\left(it\right) = O_j(|t|^{-j}).
\end{align}

 For (\ref{k_bigger_0}), push the integral rightward to the line $\Re(s)  = \frac{A+1}{\xi} +\delta$ with $0 < \delta < 1$ chosen so that the contour has distance bounded away from any pole of the integrand.  On this line, the integral may be bounded in absolute value, and has appropriate size. In shifting the contour, we pass poles from $\Delta$ on the real axis  and boundedly many poles of $\frac{1}{1-\omega \Delta(s)}$ all lying within a bounded distance of the real axis, but each of these are smaller than any negative power of $T$  due to the factor of cosine in the denominator.   
 
 For (\ref{k_equal_0}), shift to $\Re(s) =0$ and apply the approximation to 
$\frac{\Delta'}{\Delta}(s)$.  When $m \neq n$ write 
\begin{equation}\left(\frac{m}{n}\right)^s =  e^{it \log 
\frac{m}{n}}\end{equation} and integrate  by parts  in $t$ several times.  
\end{proof}

As a first example, we calculate the normalizing weight in the probability measure $w_{T,\theta}$. In this calculation we use  symmetries $\Delta(s) = \frac{1}{\Delta(-s)}$, $\frac{\Delta'}{\Delta}(s) = \frac{\Delta'}{\Delta}(-s)$.
\begin{lemma}\label{weight_lemma}
 Recall the definition $r'(p) = \frac{r(p)}{1 + r(p)^2}.$  We have
 \begin{equation}
  \sum_{t \in T_\theta} \frac{|R( i t)|^2}{\cosh(\frac{T - t}{H})}\asymp_\xi (\log T)^{\frac{1}{4}} \prod_p (1 + r(p)^2) \prod_p \left(1 + \frac{r'(p)}{\sqrt{p}}\right)  \int_{t >20}  \frac{\log(\frac{t}{2\pi} ) dt}{\cosh\left(\frac{T-t}{H}\right)} .
 \end{equation}

\end{lemma}
\begin{proof}
 To within negligible error, the left hand side is
 \begin{equation}
 \sim \oint_{\Gamma_\epsilon} R(s)R(-s) \frac{\Delta'(s)}{\Delta(s) - e^{2i \theta}} \frac{ds}{\cos\left(\frac{ iT - s}{H}\right)},
 \end{equation}
 since the boundedly many poles that do not fall on the half-line get exponentially small weight.
The integral on the $\Re(s) = \frac{1}{2} + \epsilon$  is negligible as can be 
seen by using (\ref{k_bigger_0}) with \begin{equation}\left(\sum_n a_n\right)^2 
\leq T^{1-2\xi} \sum a_n^2.\end{equation}  Substituting $s \mapsto -s$ in the 
integral on the $\Re(s) =-\frac{1}{2}-\epsilon$ line, it becomes
\begin{equation}
 \oint_{\Re(s) =\frac{1}{2} + \epsilon} R(s) R(-s) \left(-\frac{\Delta'(s)}{\Delta(s)}\right)\frac{1}{1-e^{2i\theta}\Delta(s)} \frac{ds}{\cos\left(\frac{ iT + s}{H}\right)}.
\end{equation}
When $1/(1-\Delta(s)e^{2i\theta})$ is expanded in geometric series, only the constant term contributes.  Applying (\ref{k_equal_0}) we pick up diagonal terms with negligible error.  These give the stated integral times $\sum_{n <T^{1-2\xi}} a_n^2$, which was evaluated asymptotically in Lemma \ref{eval_an_square_lemma}.
\end{proof}

We abbreviate this normalizing weight $\rNW$.

\subsection{Main estimates}
For the remainder of the argument we employ a simple truncation to approximate $\zeta(\frac{1}{2} + s)$ by a Dirichlet polynomial.  Fix a smooth function $\phi: \bR\to [0,1]$, supported in $[-1,1]$, satisfying $\phi\equiv 1$ in a neighborhood of 0.  By Mellin inversion, one obtains that, uniformly in $\left\{s = \sigma + it: \frac{T}{2} \leq t \leq 2T, \; 0 \leq \sigma \leq 2\right\}$, 
\begin{equation}\label{zeta_truncation}
\forall\; \epsilon > 0,\; \forall\; A > 0, \qquad
 \zeta\left(\frac{1}{2} + s\right) = \sum_n \frac{1}{n^{\frac{1}{2} + s}} \phi\left(\frac{n}{T^{1 + \epsilon}}\right) + O_{A,\epsilon}(T^{-A}).
\end{equation}
In the integrals below we use this approximation also for $t \not \in \left[\frac{T}{2}, 2T\right]$.  The error in doing so is negligible due to the rapid decay of the factor from cosine, which in this range is smaller than any fixed negative power of $T$.  

\subsubsection{The  lower bound of Proposition \ref{main_prop}}
We bound

\begin{equation}
 \E_{w_{T,\theta}}\left[ \left|\zeta\left(\frac{1}{2} + it\right)\right|\right]  \geq \left|\E_{w_{T,\theta}} \left[\zeta\left(\frac{1}{2} -it\right)\frac{A_{\frac{1}{2}}\left(\frac{1}{2} + it\right)^2}{\left|A_{\frac{1}{2}}\left(\frac{1}{2} + it\right)\right|^2} \right]\right|.
\end{equation}

We may write
\begin{align}
 &\E_{w_{T,\theta}} \left[\zeta\left(\frac{1}{2} 
-it\right)\frac{A_{\frac{1}{2}}\left(\frac{1}{2} + 
it\right)^2}{\left|A_{\frac{1}{2}}\left(\frac{1}{2} + it\right)\right|^2} 
\right] \cdot \rNW\\ \notag &\sim \oint_{\Gamma_\epsilon} 
\zeta\left(\frac{1}{2} -s\right)  A_{\frac{1}{2}}\left(\frac{1}{2} +s\right)^2  
R^*(s)R^*(-s)\frac{\Delta'(s)}{\Delta(s) - e^{2i \theta}} 
\frac{ds}{\cos\left(\frac{ iT - s}{H}\right)}  .
\end{align}

On the line $\Re(s) = \frac{1}{2}+\epsilon$, write $\zeta(\frac{1}{2}-s) = \frac{\zeta(\frac{1}{2}+s)}{\Delta(s)}$  and expand $\frac{\Delta'(s)}{\Delta(s) - e^{2i \theta}}$ in geometric series.  Appealing to (\ref{k_bigger_0}) of Lemma \ref{delta_integral_lemma}, one easily checks that all but the first term in the geometric series expansion give negligible contribution.  The first term is
\begin{equation}
 \frac{1}{e^{2i \theta}} \oint_{\Re(s) = \frac{1}{2} +\epsilon} \zeta\left(\frac{1}{2} +s\right)  A_{\frac{1}{2}}\left(\frac{1}{2} +s\right)^2  R^*(s)R^*(-s)\left(-\frac{\Delta'}{\Delta}(s)\right) \frac{ds}{\cos\left(\frac{ iT - s}{H}\right)}.
\end{equation}
Approximate $\zeta$ with its Dirichlet series (\ref{zeta_truncation}) with negligible error, and apply (\ref{k_equal_0}) of Lemma \ref{delta_integral_lemma} to obtain diagonal terms plus an error which is negligible.  Associating variables $n$ to $\zeta$, $m_1, m_2$ to the two powers of $A_{\frac{1}{2}}$ and $\ell_1, \ell_2$ to $R^*(s)$ and $R^*(-s)$, the diagonal condition is $nm_1m_2\ell_1 = \ell_2$, and thus the diagonal term is given by (replace $\ell_2 := \frac{\ell_2}{\ell_1}$ and note that $\phi$ does not enter due to the range of summation)
\begin{equation}
 \frac{-1}{e^{2i\theta}}  \sum_{\ell_1} r(\ell_1)^2 \sum_{\substack{\ell_2 \leq \frac{N}{\ell_1}\\(\ell_2,\ell_1) = 1}} \frac{r(\ell_2)}{\sqrt{\ell_2}} \sum_{\substack{nm_1m_2 = \ell_2\\ m_1, m_2 < T^\xi}} d_{\frac{1}{2}}(m_1)d_{\frac{1}{2}}(m_2) \times \int_{t\geq 20} \frac{(\log \frac{t}{2\pi}+O(\frac{1}{t})) dt}{\cosh\left(\frac{t-T}{H}\right)}.
\end{equation}
The inner sum is bounded by $d(\ell_2)$, and so, after dividing by the normalizing weight, we find that this term contributes a quantity which is
\begin{equation}
 \ll (\log T)^{\frac{-1}{4}} \prod_p \left(1 + \frac{r'(p)}{\sqrt{p}}\right),
\end{equation}
which is an error term.

On the line $\Re(s) = -\frac{1}{2} -\epsilon$, exchange $s\mapsto -s$ to write the integral as
\begin{equation}
 \oint_{\Re(s) = \frac{1}{2}+\epsilon} \zeta\left(\frac{1}{2} + s\right)A_{\frac{1}{2}}\left(\frac{1}{2}-s\right)^2 R^*(s) R^*(-s) \left(-\frac{\Delta'}{\Delta}(s)\right) \frac{1}{1 - e^{2i\theta}\Delta(s)} \frac{ds}{\cos\left(\frac{iT + s}{H}\right)}.
\end{equation}
Expanding $\frac{1}{1-e^{2i\theta}\Delta(s)}$ in geometric series, only the constant term contributes.  In this term, replace $\zeta$ with its approximating Dirichlet polynomial and take only the diagonal terms from the resulting integral.  With the same variable conventions as before, and pulling out $g = (\ell_1, \ell_2)$, this yields
\begin{equation}
  \sum_g r(g)^2 \sum_{\substack{\ell_1, \ell_2 \leq \frac{N}{g}\\ (\ell_1,\ell_2) = (\ell_1\ell_2,g) = 1}} r(\ell_1)r(\ell_2) \sum_{\substack{m_1,m_2 \leq T^\xi\\ \ell_1 n=\ell_2 m_1m_2}} \frac{d_{\frac{1}{2}}(m_1)d_{\frac{1}{2}}(m_2)}{\sqrt{nm_1m_2}} \times \int_{t\geq 20} \frac{(\log \frac{t}{2\pi} +O(\frac{1}{t}))dt}{\cosh\left(\frac{t-T}{H}\right)} . 
\end{equation}
Since we seek only a lower bound, truncate to $\ell_1, \ell_2 < Z = \exp\left((\log T)^{\frac{2}{3}}\right)$.  Also, write $\ell_{11} = (\ell_1, m_1)$, $\ell_{12} = (\ell_1, m_2)$ and replace $m_1 := \frac{m_1}{\ell_{11}}$, $m_2:= \frac{m_2}{\ell_{12}}$.  Omitting the integral, the sum becomes
\begin{align}
& \sum_{\substack{\ell_1, \ell_2 < Z\\ (\ell_1, \ell_2) = 1}} \frac{r(\ell_1) 
r(\ell_2)}{\sqrt{\ell_1\ell_2}}\sum_{\ell_{11}\ell_{12} = 
\ell_1}\sum_{\substack{g \leq \frac{N}{\max(\ell_1,\ell_2)}\\ (g, 
\ell_1\ell_2)=1}} r(g)^2 \sum_{\ell_{11}m_1, \ell_{12}m_2 \leq T^\xi} 
\frac{d_{\frac{1}{2}}(\ell_{11}m_1)d_{\frac{1}{2}}(\ell_{12} m_2)}{m_1m_2}\\ 
\notag
&\gg_\xi \log T \prod_p \left(1 + \frac{2r'(p)}{\sqrt{p}}\right) \prod_p (1 + r(p)^2),
\end{align}
where we use (\ref{one_factor_lower_bound}) of Lemma \ref{divisor_extraction} to evaluate the sums over $m_1, m_2$, and use the concentration properties of $r$ to evaluate the resulting sums over $\ell_1, \ell_2$ and $g$.  Inserting the integral over $t$ and dividing by the normalizing weight, we arrive at the claimed main term.

\subsubsection{The upper bound of Proposition \ref{main_prop}}
The signed expectation is given by
\begin{align}
 &\left|\E_{w_{T,\theta}}\zeta\left(\frac{1}{2} + it\right) \right|\cdot \rNW
\\ \notag &=
 \left|\oint_{\Gamma_\epsilon}\zeta\left(\frac{1}{2}-s\right) A_{\frac{1}{2}}\left(\frac{1}{2} + s\right) A_{\frac{1}{2}}\left(\frac{1}{2}-s\right) R(s) R(-s)  \frac{\Delta'(s)}{\Delta(s)-e^{2i\theta}} \frac{ds}{\cos\left(\frac{ iT - s}{H}\right)}\right|.
\end{align}
The argument for estimating these integrals is exactly as in the lower bound, so we only describe the evaluation of the diagonal terms.  Furthermore, up to a constant of absolute value 1, each diagonal term is given by
\begin{equation}
 \sum_g r(g)^2 \sum_{\substack{\ell_1, \ell_2 \leq \frac{N}{g}\\ (\ell_1,\ell_2) = (\ell_1\ell_2,g) = 1}} r(\ell_1)r(\ell_2) \sum_{\substack{m_1,m_2 \leq T^\xi\\ \ell_1 m_1 n=\ell_2 m_2}} \frac{d_{\frac{1}{2}}(m_1)d_{\frac{1}{2}}(m_2)}{\sqrt{nm_1m_2}} \times \int_{t\geq 20} \frac{(\log \frac{t}{2\pi}+O(\frac{1}{t})) dt}{\cosh\left(\frac{t-T}{H}\right)}
\end{equation}
Write $\ell_{21} = (\ell_2, m_1)$, $\ell_{22} = (\ell_2, n)$, replace $m_1 := \frac{m_1}{\ell_{21}}$, $n:= \frac{n}{\ell_{22}}$ and appeal (\ref{two_divisor_upper_bound}) of  Lemma \ref{divisor_extraction} to estimate
\begin{align}
 &\sum_g r(g)^2 \sum_{\substack{\ell_1, \ell_2 \leq \frac{N}{g}\\ (\ell_1,\ell_2) = (\ell_1\ell_2,g) = 1}} \frac{r(\ell_1)r(\ell_2)}{\sqrt{\ell_1\ell_2}} \sum_{\ell_{21}\ell_{22} = \ell_2} \sum_{\ell_{21}m_1, \ell_1 m_1 n < T^\xi} \frac{d_{\frac{1}{2}}(\ell_{21}m_1 ) d_{\frac{1}{2}}(\ell_1 m_1 n )}{m_1 n}
 \\ \notag
 &\ll (\log T)^{\frac{3}{4}} \sum_g r(g)^2 \sum_{\substack{\ell_1, \ell_2 \leq 
\frac{N}{g}\\ (\ell_1,\ell_2) = (\ell_1\ell_2,g) = 
1}}\frac{r(\ell_1)r(\ell_2)d_{\frac{1}{2}}(\ell_1)d_{\frac{3}{2}}(\ell_2)}{\sqrt
{\ell_1\ell_2}} \\ \notag &\ll (\log T)^{\frac{3}{4}} \prod_p\left(1 + 
\frac{2r'(p)}{\sqrt{p}}\right) \prod_p (1 + r(p)^2).
\end{align}
Reinserting the integral and dividing by the normalizing weight, we arrive at the claimed upper bound.

\section{The argument of large central values of Dirichlet $L$-functions}

In what follows we restrict consideration to the $L$-functions associated to even $(\chi(-1)=1)$ primitive characters modulo large prime $q$.  In particular, these satisfy a common functional equation
\begin{equation}
 \Lambda(s,\chi) = \Gamma\left(\frac{s}{2}\right) \left(\frac{q}{\pi}\right)^{\frac{s}{2}} L(s,\chi) = \frac{\tau(\chi)}{\sqrt{q}}\Lambda(1-s,\overline{\chi}). 
\end{equation}We indicate uniform expectation over primitive even characters by 
$\E_{\chi \bmod q}^+$.

Let $N = q^\eta$, $0< \eta < \frac{1}{32}$, set $L = \sqrt{\log N\log\log N}$ and define resonating polynomial
\begin{equation}
 R(\chi) = \sum_{n \leq N} r(n) \chi(n).
\end{equation}
As in the case of $\zeta$, we supplement our  resonator with a short Dirichlet polynomial. 
For a small $\xi$, $0 < \xi < \frac{1}{16}-\frac{\eta}{2}$, let
\begin{equation}
A_{\frac{1}{2}}(\chi) = \sum_{n \leq q^{\xi}} \frac{d_{\frac{1}{2}}(n)\chi(n)}{\sqrt{n}}. 
\end{equation}
This, in a distributional sense, behaves somewhat like $L\left(\frac{1}{2}, 
\chi \right)^{\frac{1}{2}}$.

Define, for $f: \widehat{\zed/q\zed} \to \bC$ 
\begin{equation}\E_R[f] = \frac{\E_{\chi\bmod q}^+ \left[|R(\chi)|^2 
f(\chi)\right]}{ \E_{\chi \bmod q}^+ \left[|R(\chi)|^2\right]}.\end{equation}

The key estimates used in proving Theorem \ref{dirichlet_L_angle_theorem} are as follows.
\begin{theorem}[Estimates of Weyl Type]\label{dirichlet_moment_theorem}
 Let $\sL = \log q \prod_p \left(1 + \frac{r(p)}{\sqrt{p}(1 + r(p)^2) }\right)$.
 We have the following estimates for moments of $L(\frac{1}{2},\chi)$ (all implicit constants depend upon $\xi$ and $\eta$). 
 \begin{enumerate}
  \item[a.] The central moments satisfy
   \begin{align}
  \E_{R}\left[ \left|L\left(\frac{1}{2},\chi\right)\right|^2 
\left|A_{\frac{1}{2}}(\chi)\right|^4\right] &\gg \sL^4,\\ \notag
 \E_{R}\left[ \left|A_{\frac{1}{2}}(\chi)\right|^8\right],\;  \E_{R}\left[ \left|L\left(\frac{1}{2},\chi\right)\right|^4\right] &\ll \sL^4.
 \end{align}
  \item[b.]  The signed moments satisfy 
  \begin{align}    
\E_R\left[L\left(\frac{1}{2},\chi\right)\left|A_{\frac{1}{2}}
(\chi)\right|^6\right] &\ll (\log q)^{\frac{-1}{4}} \sL^4\\ 
\E_R\left[L\left(\frac{1}{2}, \chi\right)^2 
\left|A_{\frac{1}{2}}(\chi)\right|^4\right] &\ll (\log q)^{-1} \sL^4. \notag 
  \end{align}
  \item[c.] Twisting by higher phases, there is $B>0$ such that  
\begin{align}
 \forall\, m\geq 1, \qquad &  \E_R\left[e(2m\theta_\chi) 
L\left(\frac{1}{2},\chi\right)\left|A_{\frac{1}{2}}(\chi)\right|^6\right], \; 
\E_R\left[e(2m\theta_\chi) 
L\left(\frac{1}{2},\chi\right)^2\left|A_{\frac{1}{2}}(\chi)\right|^4\right]\\& 
\ll_{ \epsilon, B} (m+5)^B q^{-\frac{1}{8} + \eta + 2\xi+ \epsilon} \sL^4 . 
\notag
\end{align}
 
 \end{enumerate}

\end{theorem}

Domination by the unsigned moments of the same moments twisted by phases $e(m\theta_\chi), 0<|m| < M$   suggests angular equidistribution of $R$-typical values of $L(\frac{1}{2},\chi)$ at a scale of $\asymp \frac{1}{M}$, e.g. as in Weyl's criterion for equidistribution. 
We make this intuition precise by appealing to the following quantitative equidistribution result, which we use to prove Theorem \ref{dirichlet_L_angle_theorem}.

Let $\bT = \bR/\zed$ be the usual torus, with distance $\|\cdot\| 
=\|\cdot\|_{\bR/\zed}$.    
\begin{theorem}[Minorant theorem]\label{minorant_theorem}
Let $A_1, A_2 > 0$ be constants, and let $\delta \in \left(0, 
\frac{1}{4}\right)$, $\theta \in \bR/\zed$ be parameters.  There exist even and 
odd trigonometric polynomials $F^e_{\delta}(t), F^o_{\delta}(t)$ and constants 
$c_0 = \int_{\bT} F^e_{\delta}(t) dt, c_1, c_2, c_3, c_4 > 0$ 
depending at most upon $A_1, A_2$ and $\delta$ and satisfying the following 
properties. Below $*$ represents either $e$ or $o$.
\begin{enumerate}
 \item $\left\|F^*_\delta\right\|_{L^\infty}, \left\|\widehat{F}^*_{\delta}\right\|_{\ell^1} \ll 1 $ and $ \left\|\widehat{F}^*_{\delta}\right\|_{\ell^\infty} \ll \delta $

 \item $
        \deg F^*_{\delta}\ll \frac{1}{\delta (-\log \delta)^3}
       $
 \item $
        c_1 \asymp \delta, \quad c_2 \asymp \delta^{-\frac{1}{2}}, \quad 
c_3 \asymp \delta 
       $
 \item
 $ 
  c_0 - A_1 c_1 - A_2 c_3\gg \delta.
 $
\end{enumerate}
Furthermore, the function in polar coordinates $M_{\delta,\theta} : \bR_{\geq 0} 
\times \bT \to \bR$,
\begin{equation}
 M_{\delta,\theta}(r,t) = F^e_{\delta}(t-\theta)r^2 - c_1 + c_2 r F^o_{\delta}(t-\theta) - c_3 r^4
\end{equation}
satisfies
\begin{equation}
 \one\left( (r,t): M_{\delta,\theta}(r,t) \geq 0\right) \leq \one\left(r > c_4\delta^{\frac{3}{2}}\right) \cdot \one( \|t-\theta\|_{\bT} < \delta).
\end{equation}

\end{theorem}
\begin{proof}
  Let \begin{equation}\sigma(x) = e^{-\frac{1}{1-x^2}} \one( |x| < 
1)\end{equation} be the standard bump function with support in $[-1,1]$.  The 
Fourier transform of $\sigma$, 
 \begin{equation}
 \widehat{\sigma}(\xi) = \int_{-1}^1 \sigma(x) e^{2\pi i x \xi} dx, \qquad \xi \in \bR
 \end{equation}
 satisfies the decay property
 \begin{equation}
  \left|\widehat{\sigma}(\xi)\right| \ll \frac{1}{|\xi|^{\frac{3}{4}}} e^{-\sqrt{2\pi|\xi|}}
 \end{equation}
as $|\xi|\to\infty$.  
 Write $\sigma_\delta(x) = \sigma\left(\frac{x}{\delta}\right)$ and note that $\widehat{\sigma}_\delta(\xi) = \delta \widehat{\sigma}(\delta \xi)$. Treat $\sigma_\delta$ as a function on $\bT$ and define the even and odd parts of $\sigma_\delta$ by
 \begin{equation}
  \sigma_\delta^e(x) = \frac{\sigma_\delta(x) + \sigma_\delta\left(x + \frac{1}{2}\right)}{2}, \qquad \sigma_\delta^o(x) = \frac{\sigma_\delta(x) - \sigma_\delta\left(x + \frac{1}{2}\right)}{2}.
 \end{equation}

 Let $\epsilon = \frac{\widehat{\sigma}(0)}{2}$. 
 For a sufficiently large constant $C>0$  let $N = \frac{C}{\delta (-\log \delta)^3}$.  Let $T_N$ be the operator on $L^2(\bT)$ which truncates the Fourier series at degree $N$. We define $F^e_{\delta} = -\epsilon \delta + T_N(\sigma_\delta^e),$ and $F^o_{\delta} = T_N(\sigma_\delta^o)$. 
 Evidently 
 \begin{equation}
  c_0 = \int_{\bT} F^e_\delta(t)dt = \frac{\delta}{2}\widehat{\sigma}(0).
 \end{equation}
We choose
\begin{equation}
c_1 = \frac{c_0}{3\max(A_1, 1)},  \qquad c_2 = 
\sqrt{\frac{1}{2c_3}}, \qquad c_3 = \frac{c_0}{3\max(A_2,1)},
\end{equation}
which guarantees
\begin{equation}
 c_0 - A_1 c_1 - A_2 c_3 \geq \frac{c_0}{3} \gg \delta.
\end{equation}

 We have
 \begin{equation}
  \left\|\widehat{F}_{\delta}^*\right\|_{\ell^\infty} \leq \|\sigma_\delta\|_{L^1(\bT)} \ll \delta
 \end{equation}
while from the decay properties of $\widehat{\sigma}$ one readily checks by 
bounding the Fourier series in absolute value that (the inequality fixes $C$, 
the claimed bound is not tight as $\delta \downarrow 0$)
\begin{equation}
 \left\|\widehat{F}_{\delta}^*\right\|_{\ell^1} \ll 1, \qquad \|\epsilon \delta +F_{\delta}^e - \sigma_\delta^e\|_{L^\infty(\bT)}, \;\|F_{\delta}^o - \sigma_\delta^o\|_{L^\infty(\bT)} \leq \min\left(\frac{\epsilon \delta}{2}, \frac{c_1}{2c_2^2}\right).
\end{equation}  

To check the minorant property of $M_{\delta, \theta}$ we may assume $\theta = 0$.  For $\min\left(\|t\|_{\bT}, \left\|t-\frac{1}{2}\right\|_{\bT}\right) > \delta$, $(r,t)$ is outside the support of both $\sigma_\delta^e$ and $\sigma_\delta^o$, so that
\begin{equation}
 r^2 F_\delta^e(t) + c_2 r F_\delta^o(t) \leq -\epsilon \delta r^2 + \min\left(\frac{\epsilon\delta}{2}, \frac{c_1}{2c_2^2}\right)(c_2 r + r^2) \leq c_1 + c_3 r^4,  
\end{equation}
as may be checked separately for $r \leq c_2$ and $r > c_2$.  Thus, for this range of $t$  the minorant property holds.

Next consider $t = s + \frac{1}{2}$ with $\|s\|_{\bT} \leq \delta$.  Here, again,
\begin{align}
 r^2 F_\delta^e(t) + c_2 r F_\delta^o(t) &\leq \frac{r^2 - c_2 
r}{2}\sigma_\delta(s) -\epsilon \delta r^2 + \min\left(\frac{\epsilon 
\delta}{2}, \frac{c_1}{2c_2^2}\right) (c_2 r + r^2) \\& \leq c_1 + c_3 r^4 
\notag
\end{align}
by checking separately for $r \leq c_2$ and $r > c_2$.

To confirm the final minorant property, note that for $\|t\|_\bT < \delta$ and $r < 1$ we have
\begin{align}
 r^2 F_\delta^e(t) + c_2 r F_\delta^o(t) &\leq \frac{r^2 + c_2 
r}{2}\sigma_\delta(s) -\epsilon \delta r^2 +\min\left(\frac{\epsilon \delta}{2}, 
\frac{c_1}{2c_2^2}\right) (c_2 r + r^2) \ll c_2 r 
\end{align}
so that $r^2 F_\delta^e(t) + c_2 r F_\delta^o(t) \leq c_1$ once $r \ll \frac{c_1}{c_2} \ll \delta^{\frac{3}{2}}$.

\end{proof}

Combining Theorems \ref{dirichlet_moment_theorem} and \ref{minorant_theorem} we prove Theorem \ref{dirichlet_L_angle_theorem}.
\begin{proof}[Proof of Theorem \ref{dirichlet_L_angle_theorem}]
Apply Theorem \ref{minorant_theorem} with  
\begin{equation}A_1 =\frac{\E_R\left[\left|A_{\frac{1}{2}}(\chi)\right|^8 
\right]}{\E_R\left[\left|A_{\frac{1}{2}}(\chi)\right|^4 
\left|L\left(\frac{1}{2}, \chi\right) \right|^2 \right]}, \qquad A_2 = 
\frac{\E_R\left[ \left|L\left(\frac{1}{2}, \chi\right) 
\right|^4\right]}{\E_R\left[\left|A_{\frac{1}{2}}(\chi)\right|^4 
\left|L\left(\frac{1}{2}, \chi\right) \right|^2\right]}\end{equation} 
to obtain minorant $M_{\delta, \theta}$.  Theorem \ref{minorant_theorem} 
guarantees that there exists  constant $C(\delta) \ll \delta^{-6}$ such that 
\begin{equation}
\sF_{\delta,\theta}(r,t) = C(\delta) r^4 \one(\|t-\theta\|_{\bT} \leq \delta) \geq  M_{\delta, \theta}(r,t).
\end{equation}
Thus
\begin{align}
 C(\delta) \E_R\left[\left|L\left(\frac{1}{2},\chi\right)\right|^4 \one\left(\|\theta_\chi - \theta\|_{\bT} \leq \delta\right) \right] \geq \E_R\left[\left|A_{\frac{1}{2}}(\chi) \right|^8 M_{\delta,\theta} \left(\frac{\left|L\left(\frac{1}{2}, \chi\right)\right|}{\left|A_{\frac{1}{2}}(\chi) \right|^2} , \theta_\chi\right) \right].
\end{align}
Write $\sigma_\delta(t) = \sum_n b_n e(nt)$ in Fourier series.  We expand 
\begin{align}\notag
 \left|A_{\frac{1}{2}}(\chi) \right|^8 M_{\delta,\theta} \left(\frac{\left|L\left(\frac{1}{2}, \chi\right)\right|}{\left|A_{\frac{1}{2}}(\chi) \right|^2} , \theta_\chi\right)
 &=
 c_0  \left|A_{\frac{1}{2}}(\chi) \right|^4\left|L\left(\frac{1}{2}, \chi\right)\right|^2\\ + &2\Re\sum_{0 \leq n \leq \frac{N-2}{2}} b_{2n+2} e((2n+2)\theta - 2n\theta_\chi))  \left|A_{\frac{1}{2}}(\chi) \right|^4L\left(\frac{1}{2}, \chi\right)^2
 \\\notag + &2c_2 \Re\sum_{0 \leq n \leq \frac{N-1}{2}} b_{2n+1} e((2n+1)\theta 
- 2n\theta_\chi))  \left|A_{\frac{1}{2}}(\chi) \right|^6L\left(\frac{1}{2}, 
\chi\right)\\ \notag
  -& c_1  \left|A_{\frac{1}{2}}(\chi) \right|^8- c_3 \left|L\left(\frac{1}{2},\chi \right) \right|^4.
\end{align}
By the estimates for unsigned moments in Theorem \ref{dirichlet_moment_theorem}, the first and last two terms combined have expectation
$
 \gg \delta \sL^4,
$
while, by the estimates for signed moments, the $n = 0$ terms in the two Fourier expansions have size
\begin{equation}
 \ll \delta c_2 (\log q)^{-\frac{1}{4}} \sL^4.
\end{equation}
Since $c_2 \ll \delta^{-\frac{1}{2}} = o\left((\log q)^{\frac{1}{4}}\right)$ these make negligible contribution.  By the estimates for twisted moments, the remaining terms of the Fourier expansion are $\ll q^{-\vartheta}$ for some $\vartheta > 0$, and thus are also negligible.
Since
\begin{equation}
 \log \sL \gtrsim \sqrt{\frac{\eta \log q}{\log \log q}}
\end{equation}
with the condition $\eta < \frac{1}{32}$, we obtain the result.

\end{proof}

\subsection{Estimations of Weyl type}
 For convenience we restrict attention to the central values of the $\frac{q-3}{2}$ even primitive characters. Note that the orthogonality relation for these characters is given by
\begin{equation}
\forall n_1, n_2 \not \equiv 0 \bmod q, \qquad 
 \E_{\chi \bmod q}^+[\chi(n_1)\overline{\chi}(n_2)] = \left\{ \begin{array}{lll} 1 && n_1 \equiv \pm n_2  \bmod q\\ \frac{-2}{q-3} && \text{otherwise}\end{array}.\right.
\end{equation}
The negative off-diagonal term results from removing the principal character.

Our proofs use various formulas  expressing the central values $L(\frac{1}{2},\chi)$ and their powers as truncated Dirichlet series. 
Fix a smooth function $\phi:\bR_{\geq 0} \to [0,1]$ with support in $[0,1]$ and such that $\phi \equiv  1$ on a neighborhood of $0$.  Mellin inversion gives, for all $\epsilon > 0$ and all $A>0$,
\begin{equation}\label{dirichlet_approx}
 L\left(\frac{1}{2},\chi\right) = \sum_n \frac{\chi(n)}{\sqrt{n}} \phi\left(\frac{n}{q^{1 + \epsilon}}\right) + O_{A,\epsilon}\left(q^{-A}\right).
\end{equation}

The approximate functional equation gives representations of the central values as shorter sums (\cite{IK04}, p. 98).   One has
\begin{equation}\label{AFE_signed_square}
 L\left(\frac{1}{2},\chi\right)^2 = \sum_n \frac{\chi(n)d(n)}{\sqrt{n}} V_1\left(\frac{\pi n}{q}\right) + \frac{\tau(\chi)^2}{q} \sum_n \frac{\overline{\chi}(n) d(n)}{\sqrt{n}} V_1^*\left(\frac{\pi n}{q}\right),
\end{equation}
and
\begin{align}\label{AFE_unsigned_square}
 \left|L\left(\frac{1}{2},\chi\right)\right|^2 &= 2 \sum_n \frac{d_\chi(n)}{\sqrt{n}}V_2\left(\frac{\pi n}{q}\right), \qquad d_\chi(n) = \sum_{n_1n_2 = n} \chi(n_1)\overline{\chi(n_2)}.
\end{align}
Here $V_i, V_i^*$ are some Schwartz-class functions on $\bR$ satisfying $V_i(0) = V_i^*(0) = 1$.  It will be convenient to assume, as we may, that $V_i$ and $V_i^*$ are non-negative.

Recall the definition of the expectation \begin{equation}\E_R[f] = 
\frac{\E_{\chi\bmod q}^+ \left[|R(\chi)|^2 f(\chi)\right]}{ \E_{\chi \bmod q}^+ 
\left[|R(\chi)|^2\right]}.\end{equation} We write $\rNW$ for the normalizing 
weight in the denominator. 
\begin{lemma}
 The normalizing weight $\rNW$ satisfies
 \begin{equation}
  \rNW \sim \prod_p (1 + r(p)^2).
 \end{equation}

\end{lemma}
\begin{proof}
By orthogonality,
\begin{align}
    \rNW &= \sum_{n \leq q^{\eta}} r(n)^2 - \frac{2}{q-3} \sum_{\substack{n_1, 
n_2 \leq q^{\eta }\\ n_1 \neq n_2}}r(n_1)r(n_2)\\ \notag &=  \left(1 + 
O\left(q^{\eta - 1}\right)\right)  \sum_{n \leq q^{\eta}} r(n)^2  
\end{align}    
by applying Cauchy-Schwarz.  The sum is asympotitic to the product by using the tail bound in Lemma \ref{resonance_function_lemma}.
\end{proof}
\subsubsection{Estimation of central moments}\label{dirichlet_central_moments_section}
We prove part a  of Theorem \ref{dirichlet_moment_theorem}.

To estimate the second central moment from part a, in
\begin{align}
 \rNW \sM_2 =\E_{\chi \bmod q}^+ \left[ |R(\chi)|^2 \left|A_{\frac{1}{2}}(\chi)\right|^4 \left|L\left(\frac{1}{2}, \chi\right)\right|^2\right] 
\end{align}
expand $\left|L\left(\frac{1}{2}, \chi\right)\right|^2$ via the approximate functional equation (\ref{AFE_unsigned_square}), and use orthogonality to obtain a diagonal term
\begin{align}
2 \sum_g r(g)^2 \sum_{\substack{\ell_1, \ell_2 \leq \frac{N}{g}\\ 
(\ell_1,\ell_2)=(\ell_1\ell_2,g)=1}} r(\ell_1) r(\ell_2) \sum_{n_1, n_2 
< q^{2\xi}} \frac{d_{\frac{1}{2}, 2, q^\xi}(n_1)d_{\frac{1}{2}, 2, 
q^\xi}(n_2)}{\sqrt{n_1n_2}} \sum_{\substack{m_1,m_2\\  \ell_1n_1m_1 = 
\ell_2n_2m_2}}\frac{V_2\left(\frac{\pi m_1m_2}{q}\right)}{\sqrt{m_1m_2}}
\end{align}
plus an error term which is, for any $\epsilon > 0$,
\begin{equation} 
  \ll q^{-1}\sum_{\substack{\ell_1, \ell_2 \leq N}} r(\ell_1) r(\ell_2) 
\sum_{n_1, n_2 < q^{2\xi}} \frac{1}{\sqrt{n_1n_2}} 
\sum_{\substack{m_1,m_2}}\frac{V_2\left(\frac{\pi 
m_1m_2}{q}\right)}{\sqrt{m_1m_2}} \ll_\epsilon \rNW q^{\eta +2\xi - \frac{1}{2} 
+ \epsilon} = o(\rNW).
\end{equation}
Discarding the error, and restricting to $m_1n_1, m_2n_2 < q^\xi$ in the main term, thus removing the restricted divisor functions, we obtain the lower bound
\begin{equation}
\sM_2 +o(1) \gg  \frac{1}{\rNW}\sum_{g} r(g)^2 \sum_{\substack{\ell_1, \ell_2 \leq \frac{N}{g}\\ (\ell_1,\ell_2)=(\ell_1\ell_2,g)=1}} \frac{r(\ell_1) r(\ell_2)}{\sqrt{\ell_1\ell_2}} \sum_{m \leq q^\xi} \frac{d(\ell_1m)d(\ell_2 m)}{m}.
\end{equation}
Restricting to $\ell_1, \ell_2 < Z = \exp\left((\log q)^{\frac{2}{3}}\right)$ and applying 
 (\ref{divisor_lower_bound}) of Lemma \ref{divisor_extraction}, the inner sum is $\gg d(\ell_1)d(\ell_2) (\log q)^4$, so that we obtain the lower bound (we appeal to concentration properties of $r$ as before)
\begin{equation}
 \sM_2 +o(1)\gg \frac{(\log q)^4}{\rNW} \sum_{g} r(g)^2 \sum_{\substack{\ell_1, \ell_2 \leq \min\left(Z,  \frac{N}{g}\right)\\ (\ell_1,\ell_2)=(\ell_1\ell_2,g)=1}} \frac{r(\ell_1) r(\ell_2)d(\ell_1)d(\ell_2)}{\sqrt{\ell_1\ell_2}} \gg  \sL^4.
\end{equation}

Now we prove the upper bounds.
Treating the off-diagonal terms as above, we bound (use $d(\ell n)\leq d(\ell)d(n)$)
\begin{align}\notag
 \E_{\chi \bmod q}^+ \left[ |R(\chi)|^2 \left|A_{\frac{1}{2}}(\chi)\right|^8 
\right] &\ll \sum_{g} r(g)^2 \sum_{\substack{\ell_1, \ell_2 \leq \frac{N}{g}\\ 
(\ell_1,\ell_2)=(\ell_1\ell_2,g)=1}}r(\ell_1) r(\ell_2) \sum_{\substack{n_1, n_2 
\leq q^{4\xi}\\ \ell_1 n_1 = \ell_2n_2}} \frac{d(n_1)d(n_2)}{\sqrt{n_1n_2}}\\& 
 \ll (\log q)^4 \sum_{g} r(g)^2 \sum_{\substack{\ell_1, \ell_2 \leq 
\frac{N}{g}\\ (\ell_1,\ell_2)=(\ell_1\ell_2,g)=1}} \frac{r(\ell_1) 
r(\ell_2)d(\ell_1)d(\ell_2)}{\sqrt{\ell_1\ell_2}} \\& \notag \ll \rNW \sL^4.
\end{align}

To bound the unsigned fourth moment, we insert the following consequence of Theorem \ref{twisted_fourth_moment_theorem}.
\begin{cor}
Let $q$ be a large prime, let $\eta < \frac{1}{32}$ and let $\ell_1, \ell_2 < q^\eta$ be square-free, satisfying $(\ell_1, \ell_2) = 1$.  We have the upper bound
 \begin{equation}
  \E_{\chi\bmod q}^+ \left[ \chi(\ell_1)\overline{\chi}(\ell_2) 
\left|L\left(\frac{1}{2} + \alpha, \chi \right)\right|^4\right] \ll  
\frac{d(\ell_1) d(\ell_2)}{\sqrt{\ell_1\ell_2}} (\log q)^4.
 \end{equation}
\end{cor}

\begin{proof}
This follows from Theorem \ref{twisted_fourth_moment_theorem} on noting that
 if $D_k = P\left(\frac{\partial}{\partial \alpha}, \frac{\partial}{\partial \beta}, \frac{\partial}{\partial \gamma}, \frac{\partial}{\partial \gamma}\right)$ is an order $k$ differential operator, then 
 \begin{equation}D_k \log \tau_{\alpha, \beta, \gamma, 
\delta}(\ell)\Big|_{\alpha = \beta = \gamma = \delta = 0} \ll_P  (\log 
\ell)^k.\end{equation}
 \end{proof}

 We obtain immediately
 \begin{align}\notag
  \E_{\chi\bmod q}^+ \left[|R(\chi)|^2 \left|L\left(\frac{1}{2},\chi\right)\right|^4\right]& \ll  (\log q)^4 \sum_g r(g)^2 \sum_{\substack{\ell_1, \ell_2 \leq \frac{N}{g}\\
 (\ell_1,\ell_2)=(g, \ell_1\ell_2)=1}} \frac{r(\ell_1)r(\ell_2)d(\ell_1)d(\ell_2)}{\sqrt{\ell_1\ell_2}}\\& \ll \rNW \sL^4.
 \end{align}

\subsubsection{Estimation of signed moments twisted by phases}
Throughout this section we describe only the second moment.  The first can be handled similarly, the essential difference being that we replace the approximate functional equation for the second moment with the longer approximation (\ref{dirichlet_approx}) for the first.  

Write, for $m \in \zed$, $e\left(2m\theta_\chi\right) = \left(\frac{\tau(\chi)}{\sqrt{q}}\right)^m.$ Open the resonator and approximate functional equation as
\begin{align}\notag 
\forall m \geq 0, \quad \sM_{2,m} &= 
\E_R\left[e(2m\theta_\chi)\left|A_{\frac{1}{2}}(\chi)\right|^4 
L\left(\frac{1}{2},\chi\right)^2\right] \\&=\frac{1}{\rNW}\sum_{\ell_1, \ell_2 < 
q^{\eta}} r(\ell_1)r(\ell_2) \sum_{n_1, n_2 \leq q^{2\xi}}\frac{d_{\frac{1}{2}, 
2, q^\xi}(n_1)d_{\frac{1}{2}, 2, q^\xi}(n_2)}{\sqrt{n_1n_2}}\\& \notag \qquad 
\times\E_{\chi \bmod 
q}^+\Biggl[\chi(\ell_1n_1)\overline{\chi}(\ell_2n_2)\left(\frac{\tau(\chi)}{
\sqrt{q}}\right)^m \\&\notag \qquad\times \left( \sum_n 
\frac{\chi(n)d(n)}{\sqrt{n}} V_1\left(\frac{\pi n}{q}\right) + 
\frac{\tau(\chi)^2}{q} \sum_n \frac{\overline{\chi}(n) d(n)}{\sqrt{n}} 
V_1^*\left(\frac{\pi n}{q}\right)\right)\Biggr].
\end{align}
Introduce  for $(n,q)=1$  and $m \geq 1$ the hyper-Kloosterman sum
 \begin{equation}
  \rKL_m(n,q) = \sum_{\substack{n_1,...,n_m \bmod q\\ n_1...n_m \equiv n \bmod q}} e\left(\frac{n_1 + ... + n_m}{q}\right).
 \end{equation}
 We set $\rKL_0(n,q) = \one\{n \equiv 1 \bmod q\}$.
\begin{lemma}
 Let $a, b \not \equiv 0 \bmod q$.  For each $m \geq 0$ we have
 \begin{equation}
  \E_{\chi \bmod q}^+ \left[\chi(a)\overline{\chi}(b) \tau(\chi)^m\right] = 
\left(1 + \frac{2}{q-3}\right) \left(\rKL_m(\overline{a}b, q) + 
\rKL_m(-\overline{a}b, q)\right) + (-1)^{m+1} \frac{2 }{q-3}.
 \end{equation}
\end{lemma}
\begin{proof}
 This follows directly on expanding the Gauss sum and applying the orthogonality relation. The term $(-1)^{m+1} \frac{2}{q-3}$ results from removing the principal character.
\end{proof}

Applying this lemma in the first of the two sums resulting from the approximate functional equation gives
\begin{align}
&\sM_{2,m}^1 = \frac{1}{\rNW}  \sum_{\ell_1, \ell_2 < q^{\eta }} 
r(\ell_1)r(\ell_2) \sum_{n_1, n_2 < q^{2\xi}} 
\frac{d_{\frac{1}{2},2,q^\xi}(n_1)d_{\frac{1}{2},2,q^\xi}(n_2)}{\sqrt{n_1n_2}} 
\times\\ \notag
&\times \Biggl( \frac{q-1}{q-3} \sum_{\varepsilon = \pm 1}\sum_{(n, q)=1} 
\frac{\rKL_m (\varepsilon \overline{n \ell_1 n_1} \ell_2n_2, 
q)d(n)}{q^{\frac{m}{2}}\sqrt{n}} V_1\left(\frac{\pi n}{q}\right)\\& \notag 
\qquad \qquad + \frac{(-1)^{m+1} 2}{(q-3)q^{\frac{m}{2}}} \sum_{(n, q)=1} 
\frac{d(n)V_1\left(\frac{\pi n}{q}\right)}{\sqrt{n}}\Biggr).
\end{align}
while the second term gives
\begin{align}
&\sM_{2,m}^2 = \frac{1}{\rNW}  \sum_{\ell_1, \ell_2 < q^{\eta }} 
r(\ell_1)r(\ell_2) \sum_{n_1, n_2 < q^{2\xi}} 
\frac{d_{\frac{1}{2},2,q^\xi}(n_1)d_{\frac{1}{2},2,q^\xi}(n_2)}{\sqrt{n_1n_2}} 
\times\\ \notag
&\times \Biggl( \frac{q-1}{q-3} \sum_{\varepsilon = \pm 1}\sum_{(n, q)=1} 
\frac{\rKL_{m+2} (\varepsilon n\overline{ \ell_1 n_1} \ell_2 n_2, 
q)d(n)}{q^{\frac{m+2}{2}}\sqrt{n}} V_1^*\left(\frac{\pi n}{q}\right) \\& 
\notag \qquad\qquad + \frac{(-1)^{m+3} 2}{(q-3)q^{\frac{m+2}{2}}} \sum_{(n, 
q)=1} \frac{d(n)V_1^*\left(\frac{\pi n}{q}\right)}{\sqrt{n}}\Biggr).
\end{align}
The terms not involving hyper-Kloosterman sums are negligible.

The first sum in the case $m=0$ must be handled separately because it has no oscillatory term.  As the sum consists of positive terms, we bound $d_{\frac{1}{2}, 2, q^\xi}(n_i) \leq 1$ to obtain (cancel the factor of $\ell_1$ from $m_2$)
\begin{align}
 \sM_{2,0}^1 &\ll \frac{1}{\rNW} \sum_g r(g)^2 \sum_{\substack{\ell_1, \ell_2 
\leq \frac{N}{g}\\ (\ell_1,\ell_2) = (\ell_1\ell_2,g)=1}} r(\ell_1)r(\ell_2) 
\sum_{\substack{m_1, m_2 < q^{2\xi}, n \geq 1\\ \ell_1m_1 n = \ell_2 m_2}} 
\frac{d(n)V_1\left(\frac{\pi n}{q}\right)}{\sqrt{m_1m_2n}}\\ \notag
 &\ll \frac{1}{\rNW} \sum_g r(g)^2 \sum_{\substack{\ell_1, \ell_2 \leq \frac{N}{g}\\ (\ell_1,\ell_2) = (\ell_1\ell_2,g)=1}} \frac{r(\ell_1)r(\ell_2)}{\sqrt{\ell_1\ell_2}} \sum_{m \leq q^{2\xi}} \frac{d_3(\ell_2 m)}{m} \ll\frac{1}{\log q} \sL^4.
\end{align}

The second term in case $m=0$ and both terms for $m \geq 1$ contain non-trivial hyper-Kloosterman sums. These sums are trace functions of the type studied first by Katz \cite{K88} and further developed by Fouvry-Kowalski-Michel \cite{FKM14}.  In particular, when the Kloosterman sum is non-trivial,
\begin{equation}
 n \mapsto  \frac{\rKL_{m} (\pm \overline{ n\ell_1} \ell_2, q)}{q^{\frac{m-1}{2}}}, \qquad  n \mapsto  \frac{\rKL_{m+2} (\pm n\overline{ \ell_1} \ell_2, q)}{q^{\frac{m+1}{2}}}
\end{equation}
have conductors $m+3$ and $m+5$ respectively.
By making a dyadic partition of unity, it follows from \cite{FKM14} Theorem 1.15 that for any $\vartheta < \frac{1}{8}$, and for sufficiently large $B$
\begin{align}
 \sum_{(n,q)=1} \frac{\rKL_m (\pm \overline{n \ell_1} \ell_2, q)d(n)}{\sqrt{n} 
q^{\frac{m}{2}}} V_1\left(\frac{\pi n}{q}\right) &\ll_{\vartheta} (m+3)^B 
q^{-\vartheta}\\ \notag 
  \sum_{(n, q)=1} \frac{\rKL_{m+2} (\pm  n \overline{\ell_1}\ell_2, q)d(n)}{\sqrt{n} q^{\frac{m+2}{2}}} V_1^*\left(\frac{\pi n}{q}\right) &\ll_{ \vartheta} (m+5)^B q^{-\vartheta}.
\end{align}

Inserting these bounds together with  $r(\ell_1)r(\ell_2)\frac{d_{\frac{1}{2},2,q^\xi}(n_1)d_{\frac{1}{2},2,q^\xi}(n_2)}{\sqrt{n_1n_2}}< q^{\eta + 2\xi}\frac{r(\ell_1)r(\ell_2)}{\sqrt{\ell_1\ell_2}n_1n_2}$ above gives for sufficiently large $B$, [recall $r'(p) = \frac{r(p)}{1 + r(p)^2}$] 
\begin{align}
 \sM_{2, 0}^2, \; \sM_{2, m} &\ll_{ \vartheta } \frac{(m+5)^B q^{-\vartheta + \eta + 2\xi}}{\rNW}  \sum_{\ell_1, \ell_2 <N} \frac{r(\ell_1) r(\ell_2)}{\sqrt{\ell_1\ell_2}} \sum_{m_1, m_2 < q^{\xi}} \frac{1}{m_1m_2}, \qquad m \geq 1
\\& \notag  \ll_{\vartheta} (m+5)^B q^{-\vartheta + \eta + 2\xi} (\log 
q)^{2}\prod_p\left(1 + \frac{r(p)}{\sqrt{p}}\right).
 \end{align}
and after dividing by the normalizing weight, this gives the bound  claimed in part c. of Theorem \ref{dirichlet_moment_theorem}.

\end{document}